\newtheorem{theorem}{Theorem}[section]
\newtheorem{lemma}[theorem]{Lemma}
\newtheorem{corollary}[theorem]{Corollary}
\newtheorem{proposition}[theorem]{Proposition}
\theoremstyle{definition}
\newtheorem{definition}[theorem]{Definition}
\newtheorem{example}[theorem]{Example}
\theoremstyle{remark}
\numberwithin{equation}{section}
\begin{document}

\title{Measure-expansive systems}


\author{C. A. Morales}
\address{Beijing Advanced Innovation Center for Future Blockchain and Privacy Computing, Beihang University, Beijing, China \& Beijing Academy of Blockchain and edge Computing, Beijing, 100086.}
\curraddr{}
\email{morales@impa.br}
\thanks{}

\keywords{Measure-expansive, Borel probability measure, Expansive.}
\subjclass[2000]{Primary  37B05, Secondary 28C15.}



\date{}

\dedicatory{}

\begin{abstract}
We call a dynamical system on a measurable metric space {\em measure-expansive}
if the probability of two orbits remain close each other for all time
is negligible (i.e. zero).
We extend results of expansive systems on compact metric spaces to the measure-expansive context.
For instance, the measure-expansive homeomorphisms are characterized as those homeomorphisms $f$ for which
the diagonal is almost invariant for $f\times f$ with respect to the product measure.
In addition, the set of points with converging semi-orbits for such homeomorphisms have measure zero.
In particular, the set of periodic orbits for these homeomorphisms is also of measure zero.
We also prove that there are no measure-expansive homeomorphisms in the interval and, in the circle, they are  the Denjoy ones.
As an application we obtain probabilistic proofs of some result of expansive systems.
We also present some analogous results for continuous maps.
\end{abstract}

\maketitle

\section{Introduction}
\label{intro}

\noindent
The {\em expansive homeomorphisms}, or, homeomorphisms
for which two orbits cannot remain close each other,
were introduced by Utz in the middle of the twenty century \cite{u} (see also \cite{gh}).
Since them an extensive literature about these homeomorphisms has been developed.

For instance, \cite{w1} proved
that the set of points doubly asymptotic to a given point for expansive homeomorphisms is at most countable.
Moreover, a homeomorphism of a compact metric space is expansive if
it does in the complement of finitely many orbits \cite{w2}.
In 1972 Sears proved the denseness of expansive homeomorphisms with respect to the uniform topology
in the space of homeomorphisms of a Cantor set \cite{se}.
An study of expansive homeomorphisms using generators is given in \cite{bw}.
Goodman \cite{g2} proved that every expansive homeomorphism of a compact metric space
has a (nonnecessarily unique) measure of maximal entropy and
Bowen \cite{b2} added specification to obtain unique equilibrium states.
In another direction, \cite{rr}
studied expansive homeomorphisms with canonical coordinates
and showed in the locally connected case that sinks or sources cannot exist.
Two years later, Fathi characterized expansive homeomorphisms on compact metric spaces as those exhibiting
adapted hyperbolic metrics \cite{f} (see also \cite{sa} or \cite{dgz} for more about adapted metrics).
Using this he was able to obtain an upper bound of the Hausdorff dimension and upper capacity
of the underlying space using the topological entropy.
In \cite{k} it is computed the large deviations of irregular periodic orbits for expansive homeomorphisms
with the specification property.
The $C^0$ perturbations of expansive homeomorphisms on compact metric spaces
were considered in \cite{cs}.
Besides, the multifractal analysis of expansive homeomorphisms with the specification property
was carried out in \cite{tv}.
We can also mention \cite{cz}
in which it is studied a new measure-theoretic pressure for expansive homeomorphisms.

From the topological viewpoint we can mention \cite{o} and \cite{r} proving the existence of expansive
homeomorphisms in the genus two closed surface, the $n$-torus and the open disk.
Analogously for compact surfaces obtained by making holes
on closed surfaces different from the sphere, projective plane and Klein bottle \cite{ka2}.
In \cite{ju} it was proved that there are no expansive homeomorphisms
of the compact interval, the circle and the compact $2$-disk.
The same negative result was obtained independently by Hiraide and Lewowicz in
the $2$-sphere \cite{h}, \cite{l}.
Ma\~n\'e proved in \cite{m}
that a compact metric space exhibiting expansive homeomorphisms
must be finite dimensional and, further, every minimal set of such
homeomorphisms is zero dimensional.
Previously he proved that the $C^1$ interior of the set of expansive diffeomorphisms
of a closed manifold is composed by pseudo-Anosov (and hence Axiom A) diffeomorphisms.
In 1993 Vieitez \cite{v1}
obtained results about expansive homeomorphisms on closed $3$-manifolds.
In particular, he proved that the denseness of the topologically hyperbolic periodic points
does imply constant dimension of the stable and unstable sets.
As a consequence a local product property is obtained for such homeomorphisms.
He also obtained
that expansive homeomorphisms on closed $3$-manifolds with dense topologically
hyperbolic periodic points are both supported on the $3$-torus and
topologically conjugated to linear Anosov isomorphisms \cite{v2}.

In light of these results it was natural to consider another notions of expansiveness.
For example, $G$-expansiveness, continuouswise and pointwise expansiveness were defined in \cite{da},
\cite{ka1} and \cite{r3} respectivelly.
We also have the entropy-expansiveness introduced by Bowen \cite{b}
to compute the metric and topological entropies in a large class of homeomorphisms.

In this paper we introduce a notion of expansiveness in which the Borel probability measures $\mu$ will
play fundamental role.
Indeed, we call a homeomorphism of a measurable metric space
{\em measure-expansive} (or {\em $\mu$-expansive} to indicate dependence on $\mu$) if
the probability of two orbits remain close each other for all time is zero.

It is clear that these homeomorphisms only exist for nonatomic measures and that, for such measures,
they include the expansive ones.
Besides, not every measure-expansive homeomorphism is
expansive and the identity is one which is entropy-expansive
but not measure-expansive.
We extend some result of the theory of expansive systems to the measure-expansive context.
For instance, measure-expansive homeomorphisms are characterized as those homeomorphisms $f$ for which
the diagonal is almost invariant for $f\times f$ with respect to the product measure.
In addition, the set of points with converging semi-orbits for such homeomorphisms have measure zero.
In particular, the set of periodic orbits for these homeomorphisms is also of measure zero.
We also prove that there are no measure-expansive homeomorphisms in any compact interval and, in the circle, we prove that they are precisely the Denjoy ones.
As an application we obtain probabilistic proofs of some result of expansive systems.
We also present some analogous results for continuous maps.

\section{Definition and examples}
\label{se2}

\noindent
In this section we introduce the definition of $\mu$-expansive homeomorphisms and present some examples.
To motivate let us recall the concepts of expansive and entropy-expansive
homeomorphisms \cite{u}, \cite{b}.

A homeomorphism $f: X\to X$ of a metric space $X$
is called {\em expansive}
if there is $\delta>0$ such that for every pair of different points $x,y\in X$ there is $n\in \mathbb{Z}$
such that $d(f^n(x),f^n(y))> \delta$.
Equivalently, $f$ is expansive if there is $\delta>0$ such that
$\Gamma_\delta(x)=\{x\}$ for all $x\in X$ where
$$
\Gamma_\delta(x)=\{y\in X:d(f^i(x),f^i(y)\leq \delta,\forall i\in \mathbb{Z}\}
$$
(when appropriated we write $\Gamma_\delta^f(x)$ to indicate dependence on $f$).
On the other hand, we call $f$ {\em entropy-expansive} if there is $\delta>0$ such that $h(f,\Gamma_\delta(x))=0$ for all $x\in X$
where $h(f,\cdot)$ denotes the topological entropy operation.

These definitions suggest further notions of expansiveness
related to a given property (P) of the closed sets in $X$.
More precisely, we say that $f$ is {\em expansive with respect to (P)} if
there is $\delta>0$ such that $\Gamma_\delta(x)$ satisfies (P) for all $x\in X$.
For example, a homeomorphism is expansive or h-expansive depending on whether
it is expansive with respect to the property of being a single point or
a zero entropy set respectively.
In this vein it is natural to consider the property of being {\em negligible in terms of some Borel probability measure $\mu$ of $X$}. This motivates the following definition.

\begin{definition}
A homeomorphism $f$ is {\em $\mu$-expansive} if there is $\delta>0$ such that $\mu(\Gamma_\delta(x))=0$ for all $x\in X$. The constant $\delta$ will be referred to as an {\em expansiveness constant} of $f$.
\end{definition}

Let us present some examples related to this definition.

\begin{example}
\label{ex1}
Clearly a measure $\mu$ for which there are $\mu$-expansive homeomorphisms must be
nonatomic. On the other hand, {\em if $\mu$ is nonatomic}, then every expansive homeomorphism is $\mu$-expansive.
\end{example}

\begin{example}
As is well known \cite{prv},
every complete separable metric space which either is uncountable or has no isolated points
exhibits nonatomic Borel probability measures.
It follows that every expansive homeomorphism in such a space is $\mu$-expansive for some
Borel probability $\mu$.
\end{example}

\begin{example}
There are expansive homeomorphisms on certain compact metric spaces which are not
$\mu$-expansive for all Borel probability measure $\mu$.
\end{example}

\begin{proof}
Consider the map $p(x)=x^3$ in $\mathbb{R}$
and define $X=\{0,1,-1\}\cup\{p^n(c):n\in \mathbb{N},c\in \{-\frac{1}{2},\frac{1}{2}\}\}$.
We have that $X$ is an infinite (but countable) compact metric space with the induced metric
$d(x,y)=|x-y|$.
Observe that there are no nonatomic Borel probability measures in $X$
since every non-isolated set of $X$ must be contained in
$\{-1,0,1\}$.
Defining $f(x)=p(x)$ for $x\in X$ we obtain an expansive homeomorphism $f$
which is not $\mu$-expansive for every Borel probability measure $\mu$.
\end{proof}

Further examples of homeomorphisms which are not $\mu$-expansive for all Borel probability measure $\mu$
can be obtained as follows. Recall that an {\em isometry} of a metric space $X$ is
a homeomorphism $f$ such that $d(f(x),f(y))=d(x,y)$ for all $x,y\in X$.

\begin{example}
 \label{isometry}
There are no $\mu$-expansive isometries of a separable metric space.
In particular, the identity map in these spaces (or the rotations in $\mathbb{R}^2$ or translations in $\mathbb{R}^n$) cannot be $\mu$-expansive for all $\mu$.
\end{example}

\begin{proof}   
Suppose by contradiction that there is a $\mu$-expansive isometry $f$ of a separable metric space
$X$ for some Borel probability measure $\mu$.
Since $f$ is an isometry we have $\Gamma_\delta(x)=B[x,\delta]$, where
$B[x,\delta]$ denotes the closed $\delta$-ball around
$x$. If $\delta$ is an expansivity constant of $f$, then $\mu(B[x,\delta])=\mu(\Gamma_\delta(x))=0$ for all $x\in X$.
Nevertheless, since $X$ is separable (and so Lindelof), we can select
a countable covering $\{C_1,C_2,\cdots, C_n,\cdots\}$ of $X$ by closed subsets
such that for all $n$ there is $x_n\in X$ such that $C_n\subset B[x_n,\delta]$.
Thus,
$\mu(X)\leq \sum_{n=1}^\infty\mu(C_n)\leq \sum_{n=1}^\infty \mu(B[x_n,\delta])=0$ which is a contradiction. This proves the result.
\end{proof}

\begin{example}
Endow $\mathbb{R}^n$ with a metric space with the Euclidean metric
and denote by $Leb$ the Lebesgue measure in $\mathbb{R}^n$.
Then, a linear isomorphism $f: \mathbb{R}^n\to \mathbb{R}^n$ is
$Leb$-expansive if and only if $f$ has eigenvalues of modulus less than or bigger than $1$.
\end{example}

\begin{proof} 
Since $f$ is linear
we have $\Gamma_\delta(x)=\Gamma_\delta(0)+x$ thus $Leb(\Gamma_\delta(x))=Leb(\Gamma_\delta(0))$ for all $x\in \mathbb{R}^n$ and $\delta>0$.
If $f$ has eigenvalues of modulus less than or bigger than $1$, then
$\Gamma_\delta(0)$ is contained in a proper subspace of $\mathbb{R}^n$ which implies
$Leb(\Gamma_\delta(0))=0$ thus $f$ is $Leb$-expansive.
\end{proof}

\begin{example}
As we shall see later, there are no $\mu$-expansive homeomorphism of a compact interval $I$
for all Borel probability measure $\mu$ of $I$.
In the circle $S^1$ these homeomorphisms are precisely the Denjoy ones.
\end{example}

Recall that a subset $Y\subset X$ is {\em invariant} if $f(Y)=Y$.

\begin{example}
\label{semilocal}
A homeomorphism $f$ is $\mu$-expansive, for some Borel probability measure $\mu$, if and only if
there is an invariant borelian set $Y$ of $f$ such that the restriction
$f/Y$ is $\nu$-expansive in $Y$ for some Borel probability measure $\nu$ of $Y$.
\end{example}

\begin{proof}
We only have to prove the only if part.
Assume that $f/Y$ is $\nu$-expansive in $Y$for some Borel probability measure $\nu$ of $Y$.
Fix $\delta>0$. Since $Y$ is invariant we have either $\Gamma_{\delta/2}^f(x)\cap Y=\emptyset$ or
$\Gamma_{\delta/2}^f(x)\cap Y\subset \Gamma_\delta^{f/Y}(y)$ for some $y\in Y$.
Therefore,
either $\Gamma_{\delta/2}^f(x)\cap Y=\emptyset$
or $\mu(\Gamma_{\delta/2}^f(x))\leq \mu(\Gamma_\delta^{f/Y}(y))$ for some $y\in Y$ where
$\mu$ is the Borel probability of $X$ defined by
$\mu(A)=\nu(A\cap Y)$. From this we obtain that for all $x\in X$ there is $y\in Y$ such that
$\mu(\Gamma_{\delta/2}^f(x))\leq\nu(\Gamma_\delta^{f/Y}(y))$.
Taking $\delta$ as an expansivity constant of $f/Y$
we obtain $\mu(\Gamma_{\delta/2}^f(x))=0$ for all $x\in X$
thus $f$ is $\mu$-expansive with expansivity constant $\delta/2$.
\end{proof}

The next example implies that $\mu$-expansiveness is invariant by conjugations.
Given a Borel measure $\mu$ in $X$ and a homeomorphism $\phi: X\to Y$ we denote by
$\phi_*(\mu)$ the pullback of $\mu$ defined by
$\phi_*(\mu)(A)=\mu(\phi^{-1}(A))$ for all borelian $A$.

\begin{example}
\label{conjugacy}
Let $\mu$ a Borel probability measure of $X$ and $f$ be a $\mu$-expansive homeomorphism.
If $\phi: X\to Y$ is a homeomorphism of compact metric spaces, then $\phi\circ f\circ \phi^{-1}$ is a $\phi_*(\mu)$-expansive
homeomorphism of $Y$.
\end{example}

\begin{proof}
Clearly $\phi$ is uniformly continuous so for all $\delta>0$
there is $\epsilon>0$ such that
$\Gamma_\epsilon^{\phi\circ f\circ \phi}(y)\subset \phi(\Gamma_\delta^f(\phi^{-1}(y)))$
for all $y\in Y$. This implies
$$
\phi_*(\mu)(\Gamma_\epsilon^{\phi\circ f\circ \phi}(y))\leq
\mu(\Gamma_\delta^f(\phi^{-1}(y))).
$$
Taking $\delta$ as the expansivity constant of $f$ we obtain that
$\epsilon$ is an expansivity constant
for $\phi\circ f\circ \phi^{-1}$.
\end{proof}

For the next example recall that a {\em periodic point} of a homeomorphism (or map)
$f: X\to X$ is a point $x\in X$ such that $f^n(x)=x$ for some $n\in \mathbb{N}^+$.
The nonwandering set of $f$
is the set $\Omega(f)$ formed by those points $x\in X$ such that
for every neighborhood $U$ of $x$ there is $n\in \mathbb{N}^+$ satisfying
$f^n(U)\cap U\neq\emptyset$.
Clearly a periodic point belongs to $\Omega(f)$ but not every point in $\Omega(f)$ is
periodic.
If $X=M$ is a {\em closed} (i.e. compact connected boundaryless) manifold and $f$ is a diffeomorphism we say that
an invariant set $H$ is {\em hyperbolic}
if there are a continuous invariant tangent bundle decomposition
$T_HM=E^s_H\oplus E^u_H$ and positive constants $K$, $\lambda>1$ such that
$$
\|Df^n(x)/E^s_x\|\leq K\lambda^{-n}
\quad \quad\mbox{ and }
\quad\quad
m(Df^n(x)/E^u_x)\geq K^{-1}\lambda^n,
$$
for all $x\in H$ and $n\in I\!\! N$ ($m$ denotes the co-norm operation in $M$).
We say that $f$ is {\em Axiom A} if $\Omega(f)$ is hyperbolic and the closure of the set of periodic points.

\begin{example}
Every Axiom A diffeomorphism with infinite nonwandering set of a closed manifold is $\mu$-expansive for some Borel probability measure $\mu$.
\end{example}

\begin{proof}
Consider an Axiom A diffeomorphism $f$ of a closed manifold.
It is well known that there is
a spectral decomposition $\Omega(f)=H_1\cup \cdots \cup H_k$
consisting of finitely many disjoint homoclinic classes $H_1,\cdots,H_k$ of $f$
(see \cite{hk} for the corresponding definitions).
Since $\Omega(f)$ is infinite we have that $H=H_i$ is infinite for some $1\leq i\leq k$.
As is well known $f/H$ is expansive.
On the other hand, $H$ is compact without isolated points since it is a homoclinic
class. It follows from Example \ref{ex1} that
$f/H$ is $\nu$-expansive for some Borel probability measure $\nu$ of $H$, so,
$f$ is $\mu$-expansive for some $\mu$ by Example \ref{semilocal}.
\end{proof}

\section{Equivalences}

\noindent
In this section we present some equivalences for $\mu$-expansiveness.
Hereafter all metric spaces $X$ under consideration will be compact unless otherwise stated.
We also fix a Borel probability measure $\mu$ of $X$.

To start we observe
an apparently weak definition of $\mu$-expansiveness saying that $f$ is
measure-expansive if there is
$\delta>0$ such that $\mu(\Gamma_\delta(x))=0$ {\em for $\mu$-almost every $x\in X$}. However, this definition
and the previous one are in fact equivalent by the
following lemma.

\begin{lemma}
\label{laguna1}
A homeomorphism $f$ is $\mu$-expansive
if and only if there is $\delta>0$ such that $\mu(\Gamma_\delta(x))=0$ for $\mu$-almost every $x\in X$.
\end{lemma}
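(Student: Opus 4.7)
The forward (``only if'') direction is immediate from the definition, so the content is in the converse. My plan is to show that if $\delta>0$ satisfies $\mu(\Gamma_\delta(x))=0$ for $\mu$-a.e. $x$, then $\delta/2$ works as an honest expansiveness constant in the original sense.

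Set $Z=\{x\in X:\mu(\Gamma_\delta(x))>0\}$, a Borel set (each $\Gamma_\delta(x)=\bigcap_{n\in\mathbb{Z}}f^{-n}(B[f^n(x),\delta])$ is a $G_\delta$, and $Z$ can be described via a Fubini-type argument on $f\times f$). By hypothesis $\mu(Z)=0$. Now fix an arbitrary $x\in X$ and split into two cases according to whether $\Gamma_{\delta/2}(x)$ meets the complement of $Z$.

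The first key observation is a triangle-inequality ``shift'' for the sets $\Gamma_\bullet$: if $y\in \Gamma_{\delta/2}(x)$ and $z\in \Gamma_{\delta/2}(x)$, then for every $i\in\mathbb{Z}$
\[
d(f^i(y),f^i(z))\le d(f^i(y),f^i(x))+d(f^i(x),f^i(z))\le \delta,
\]
so $z\in\Gamma_\delta(y)$. Hence $\Gamma_{\delta/2}(x)\subset\Gamma_\delta(y)$ whenever $y\in\Gamma_{\delta/2}(x)$. Using this, if there exists $y\in\Gamma_{\delta/2}(x)\setminus Z$, then $\mu(\Gamma_{\delta/2}(x))\le\mu(\Gamma_\delta(y))=0$; while if $\Gamma_{\delta/2}(x)\subset Z$, then directly $\mu(\Gamma_{\delta/2}(x))\le\mu(Z)=0$.

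Either way $\mu(\Gamma_{\delta/2}(x))=0$ for every $x\in X$, so $f$ is $\mu$-expansive with constant $\delta/2$. The only point requiring care is verifying that the two cases truly cover all $x$ and that $\Gamma_\delta(y)$ is Borel so the inequality $\mu(\Gamma_{\delta/2}(x))\le\mu(\Gamma_\delta(y))$ makes sense; both are routine given compactness of $X$ and continuity of $f$. I do not expect a genuine obstacle here — the argument is essentially the standard trick that the equivalence-relation-like sets $\Gamma_\bullet$ behave well under halving the parameter.
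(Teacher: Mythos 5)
Your argument is correct and is essentially the same as the paper's: both hinge on the triangle-inequality inclusion $\Gamma_{\delta/2}(x)\subset\Gamma_\delta(y)$ for $y\in\Gamma_{\delta/2}(x)$, and both find such a $y$ outside the null exceptional set (the paper phrases this as a contradiction, noting that a set of positive measure must meet the full-measure set $A=\{x:\mu(\Gamma_\delta(x))=0\}$, while you split directly into the two cases). The extra case $\Gamma_{\delta/2}(x)\subset Z$ that you handle via $\mu(\Gamma_{\delta/2}(x))\le\mu(Z)=0$ is the same situation the paper's contradiction rules out, so no substantive difference.
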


\begin{proof}
We only have to prove the if part.
Let $\delta>0$ be such that $\mu(\Gamma_\delta(x))=0$ for $\mu$-almost every $x\in X$.
We shall prove that $\delta/2$ is a $\mu$-expansiveness constant of $f$.
Suppose by contradiction that it is not so.
Then, there is $x_0\in X$ such that $\mu(\Gamma_{\delta/2}(x_0))>0$.
Denote $A=\{x\in X:\mu(\Gamma_\delta(x))=0\}$
so $\mu(A)=1$.
Since $\mu$ is a probability measure we obtain
$A\cap \Gamma_{\delta/2}(x_0)\neq\emptyset$ so
there is $y_0\in \Gamma_{\delta/2}(x_0)$ such that $\mu(\Gamma_\delta(y_0))=0$.

Now we observe that since $y_0\in \Gamma_{\delta/2}(x_0)$ we have
$\Gamma_{\delta/2}(x_0)\subset \Gamma_\delta(y_0)$.
In fact, if $d(f^i(x),f^i(x_0))\leq \delta/2$ ($\forall i\in \mathbb{N}$)
one has
$d(f^i(x),f^i(y_0))\leq d(f^i(x),f^i(x_0))+d(f^i(x_0),f^i(y_0))\leq \delta/2+\delta/2=\delta$
($\forall i\in \mathbb{N}$) proving the assertion.
It follows that $\mu(\Gamma_{\delta/2}(x_0))\leq \mu(\Gamma_\delta(y_0))=0$ which is a contradiction.
This proves the result.
\end{proof}

In particular, we have the following corollary in whose statement supp$(\mu)$ denotes the support of $\mu$.

\begin{corollary}
A homeomorphism $f$ is $\mu$-expansive
if and only if there is
$\delta>0$ such that $\mu(\Gamma_\delta(x))=0$ for all $x\in$ supp$(\mu)$.
\end{corollary}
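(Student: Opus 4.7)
The plan is to reduce this corollary directly to Lemma \ref{laguna1}, using the standard fact that a Borel probability measure on a compact (hence separable) metric space assigns full mass to its support.

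The forward implication is immediate from the definition: if $f$ is $\mu$-expansive with constant $\delta$, then $\mu(\Gamma_\delta(x))=0$ for every $x\in X$, and in particular for every $x\in\mathrm{supp}(\mu)$.

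For the converse, I would take $\delta>0$ such that $\mu(\Gamma_\delta(x))=0$ for every $x\in\mathrm{supp}(\mu)$ and show that the hypothesis of Lemma \ref{laguna1} holds, namely that $\mu(\Gamma_\delta(x))=0$ for $\mu$-almost every $x\in X$. The key observation is $\mu(X\setminus\mathrm{supp}(\mu))=0$. To justify this, I would use that $X$ is compact metric, hence second countable; fix a countable base $\{U_n\}_{n\in\mathbb{N}}$ and note that $X\setminus\mathrm{supp}(\mu)$ is the union of all open sets of $\mu$-measure zero, which coincides with $\bigcup\{U_n:\mu(U_n)=0\}$, a countable union of null sets and hence null. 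Consequently the set $A=\{x\in X:\mu(\Gamma_\delta(x))=0\}$ contains $\mathrm{supp}(\mu)$ by assumption, so $\mu(A)\geq\mu(\mathrm{supp}(\mu))=1$. Lemma \ref{laguna1} then yields that $f$ is $\mu$-expansive (with expansivity constant $\delta/2$, as in the lemma's proof).

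There is no real obstacle here: the argument is essentially a one-line reduction to Lemma \ref{laguna1} once the measure-theoretic fact $\mu(\mathrm{supp}(\mu))=1$ is recalled. The only point worth being explicit about is the invocation of second countability of $X$ to ensure that the complement of $\mathrm{supp}(\mu)$ is a countable (not merely arbitrary) union of open null sets.
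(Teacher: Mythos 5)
Your proof is correct and follows the same route the paper intends: the corollary is stated as an immediate consequence of Lemma \ref{laguna1}, precisely via the observation that $\mu(\mathrm{supp}(\mu))=1$ (so the hypothesis on $\mathrm{supp}(\mu)$ gives the almost-everywhere hypothesis of the lemma), with the forward direction being trivial. Your explicit justification of $\mu(X\setminus\mathrm{supp}(\mu))=0$ via second countability is exactly the standard fact the paper leaves unstated.
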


Another condition is as follows.
For every bijective map $f: X\to X$, $x\in X$, $\delta>0$ and $n\in \mathbb{N}^+$ we define
$$
V_f[x,\delta,n]=\{y\in X:d(f^i(y),f^i(y))\leq \delta,\mbox{ for all } -n\leq i<n\}.
$$
It is then clear that
$$
\Gamma_\delta(x)=\bigcap_{n\in \mathbb{N}^+}V_f[x,\delta,n]
$$
and $V_f[x,\delta,1]\supset V_f[x,\delta,2]\supset\cdots\supset V_f[x,\delta,n]\supset\cdots$
so we have
$$
\mu(\Gamma_\delta(x))=\lim_{n\to\infty}\mu(V_f[x,\delta,n])=\inf_{n\in \mathbb{N}^+}\mu(V_f[x,\delta,n])
$$
for all $x\in X$ and $\delta>0$.
From this we have the following lemma.

\begin{lemma}
 \label{suff-hom}
A homeomorphism $f$ is $\mu$-expansive
if and only if there is
$\delta>0$ such that
$$
\liminf_{n\to\infty}\mu(V_f[x,\delta,n])=0,
\quad\quad\mbox{ for all } x\in X.
$$
\end{lemma}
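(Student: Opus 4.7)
The plan is to observe that this lemma is an almost immediate consequence of the nestedness remark preceding it. The sequence of sets $V_f[x,\delta,n]$ is decreasing in $n$ with intersection $\Gamma_\delta(x)$, so the sequence of real numbers $\mu(V_f[x,\delta,n])$ is monotonically non-increasing. For a monotone sequence, $\liminf$, $\lim$, and $\inf$ all coincide, and by continuity of $\mu$ from above on nested sets (with the total mass being finite, so no measurability issue at infinity arises) this common value equals $\mu(\Gamma_\delta(x))$.

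For the forward implication, I would assume $f$ is $\mu$-expansive with expansiveness constant $\delta$. Then $\mu(\Gamma_\delta(x))=0$ for every $x\in X$. Substituting into the displayed equation
\[
\mu(\Gamma_\delta(x))=\lim_{n\to\infty}\mu(V_f[x,\delta,n]),
\]
which appears just before the statement, the limit is zero, and hence the $\liminf$ is also zero, uniformly in $x$.

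For the converse, I would take $\delta>0$ such that $\liminf_{n\to\infty}\mu(V_f[x,\delta,n])=0$ for every $x\in X$. Because the sequence $\mu(V_f[x,\delta,n])$ is non-increasing in $n$, its $\liminf$ equals its limit, which by the same displayed identity equals $\mu(\Gamma_\delta(x))$. Thus $\mu(\Gamma_\delta(x))=0$ for all $x\in X$, and $\delta$ is an expansiveness constant for $f$.

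There is no real obstacle here: the only point worth noting is that we must use $\liminf = \lim$ for monotone sequences, which is precisely where the decreasing nature of the $V_f[x,\delta,n]$ (already stated in the paragraph preceding the lemma) is used. Everything else is a direct translation via the formula $\Gamma_\delta(x)=\bigcap_n V_f[x,\delta,n]$.
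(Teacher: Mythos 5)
Your argument is correct and is exactly the one the paper intends: the paper derives the lemma directly from the displayed identity $\mu(\Gamma_\delta(x))=\lim_{n\to\infty}\mu(V_f[x,\delta,n])=\inf_{n}\mu(V_f[x,\delta,n])$, which holds by continuity of the finite measure $\mu$ on the nested decreasing sets $V_f[x,\delta,n]$, together with the observation that $\liminf=\lim$ for a monotone sequence. Nothing is missing.
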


A direct application is the following measure-expansive version of
Corollary 5.22.1-(ii) of \cite{w}.

\begin{proposition}
\label{pp2}
Given $n\in \mathbb{Z}\setminus\{0\}$ a homeomorphism
$f$ is $\mu$-expansive if and only if $f^n$ is.
\end{proposition}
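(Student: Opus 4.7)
The plan is to exploit the containment relations between the dynamical balls $\Gamma_\delta^f(x)$ and $\Gamma_\delta^{f^n}(x)$, using uniform continuity to bridge the gap in the harder direction. First I would note that replacing $f$ by $f^{-1}$ leaves every set $\Gamma_\delta(x)$ unchanged (the indexing set $\mathbb{Z}$ is the same), so $f$ is $\mu$-expansive iff $f^{-1}$ is. This reduces the problem to the case $n \geq 1$, and in fact to $n \geq 2$ since $n = 1$ is trivial.

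For the direction ``$f^n$ measure-expansive $\Rightarrow f$ measure-expansive,'' let $\delta$ be a $\mu$-expansiveness constant for $f^n$. The key observation is that if $y \in \Gamma_\delta^f(x)$, then $d(f^j(x), f^j(y)) \leq \delta$ for every $j \in \mathbb{Z}$, and in particular for every $j$ of the form $ni$. Hence $\Gamma_\delta^f(x) \subset \Gamma_\delta^{f^n}(x)$, so $\mu(\Gamma_\delta^f(x)) = 0$ for all $x$. Thus $\delta$ is itself an expansiveness constant for $f$.

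For the direction ``$f$ measure-expansive $\Rightarrow f^n$ measure-expansive,'' let $\delta$ be a $\mu$-expansiveness constant for $f$. Since $X$ is compact and each $f^r$ ($0 \leq r < n$) is continuous, there is $\epsilon>0$ such that $d(u,v) \leq \epsilon$ implies $d(f^r(u), f^r(v)) \leq \delta$ for all $0 \leq r < n$. I claim $\Gamma_\epsilon^{f^n}(x) \subset \Gamma_\delta^f(x)$. Indeed, take $y \in \Gamma_\epsilon^{f^n}(x)$, so $d(f^{ni}(x), f^{ni}(y)) \leq \epsilon$ for all $i \in \mathbb{Z}$. Any $j \in \mathbb{Z}$ can be written as $j = ni + r$ with $0 \leq r < n$; then $d(f^j(x), f^j(y)) = d(f^r(f^{ni}(x)), f^r(f^{ni}(y))) \leq \delta$ by the choice of $\epsilon$. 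Hence $y \in \Gamma_\delta^f(x)$, and therefore $\mu(\Gamma_\epsilon^{f^n}(x)) \leq \mu(\Gamma_\delta^f(x)) = 0$, showing $\epsilon$ is an expansiveness constant for $f^n$.

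I do not expect any serious obstacle here; the only subtlety is the preliminary reduction to $n > 0$ and the use of compactness to pass to a uniformly continuous modulus for the finitely many iterates $f^0, f^1, \dots, f^{n-1}$, which is precisely where the hypothesis that $X$ is compact (invoked at the start of the section) is used.
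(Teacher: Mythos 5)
Your proof is correct and follows essentially the same route as the paper: the hard direction is the identical uniform-continuity argument giving $\Gamma_\epsilon^{f^n}(x)\subset\Gamma_\delta^f(x)$, and the easy direction rests on the same sub-orbit observation, which you carry out more cleanly as the direct inclusion $\Gamma_\delta^f(x)\subset\Gamma_\delta^{f^n}(x)$ instead of routing it through the finite-time sets $V_f[x,\delta,n]$ and Lemma~\ref{suff-hom} as the paper does. Your explicit reduction of the case $n<0$ via $\Gamma_\delta^{f^{-1}}(x)=\Gamma_\delta^{f}(x)$ is a welcome detail that the paper leaves implicit.
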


\begin{proof}
We can assume that $n>0$.
First notice that $V_f[x,\delta,n\cdot m]\subset V_{f^n}[x,\delta,m]$.
If $f^n$ is expansive then by Lemma \ref{suff-hom} there is $\delta>0$ such that for every $x\in X$ there is a sequence $m_j\to\infty$ such that
$\mu(V_{f^n}[x,\delta,m_j])\to 0$ as $j\to \infty$.
Therefore $\mu(V_f[x,\delta,n\cdot m_j])\to0$ as $j\to\infty$ yielding
$\liminf_{n\to\infty}\mu(V_f[x,\delta,n])=0$. Since $x$ is arbitrary we conclude that $f$ is positively $\mu$-expansive with constant $\delta$.

Conversely, suppose that $f$ is $\mu$-expansive with constant $\delta$. Since $X$ is compact
and $n$ is fixed we can
choose $0<\epsilon<\delta$ such that if $d(x,y)\leq\epsilon$, then $d(f^i(x),f^i(y))<\delta$
for all $-n\leq i\leq n$.
With this property one has
$\Gamma_\epsilon^{f^n}(x)\subset\Gamma_\delta^f(x)$ for all $x\in X$ thus $f^n$ is $\mu$-expansive with constant $\epsilon$.
\end{proof}

One more equivalence is motivated by a well known condition
for expansiveness stated as follows.

Given two metric spaces $X$ and $Y$ we always consider the product metric in $X\times Y$ defined by
$$
d((x_1,y_1),(x_2,y_2))=d(x_1,x_2)+d(y_1,y_2).
$$
If $\mu$ and $\nu$ are measures in $X$ and $Y$ respectively we denote by $\mu\times \nu$ their product measure in $X\times Y$.
If $f: X\to X$ and $g: Y\to Y$ we define their product
$f\times g:X\times Y\to X\times Y$,
$$
(f\times g)(x,y)=(f(x),g(y)).
$$
Notice that $f\times g$ is a homeomorphism if $f$ and $g$ are.
Denote by $\Delta=\{(x,x): x\in X\}$ the diagonal of $X\times X$.

Given a map $g$ of a metric space $Y$ we call an invariant set $I$ {\em isolated} if
there is a compact neighborhood $U$ of it such that
$$
I=\{z\in U:g^n(z)\in U,\forall n\in \mathbb{Z}\}.
$$
As is well known, a homeomorphism $f$ of $X$ is expansive if and only if
the diagonal $\Delta$ is an isolated set of $f\times f$ (e.g. \cite{ak}).
To express the corresponding measure-expansive version we introduce the following definition.
Let $\nu$ be a Borel probability measure of $Y$. We call an invariant set $I$ of $g$
{\em $\nu$-isolated} if there is a compact neighborhood $U$ of $I$
such that
$$
\nu(\{z\in Y:g^n(z)\in U,\forall n\in \mathbb{Z}\})=0.
$$
With this definition we have the following result
in which we write $\mu^2=\mu\times \mu$.

\begin{theorem}
A homeomorphism $f$ is $\mu$-expansive if and only if
the diagonal $\Delta$ is a $\mu^2$-isolated set of $f\times f$. 
\end{theorem}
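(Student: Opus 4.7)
The plan is to recognize the $(f\times f)$-invariant set trapped inside a closed tubular neighborhood $U=\{(x,y)\in X\times X:d(x,y)\leq\delta\}$ of $\Delta$ as the graph set $G_\delta=\{(x,y):y\in\Gamma_\delta(x)\}$, and then to compute its $\mu^2$-measure by Fubini. Writing $I(U)=\{z\in X\times X:(f\times f)^n(z)\in U,\ \forall n\in\mathbb{Z}\}$, the identity $(f\times f)^n(x,y)=(f^n(x),f^n(y))$ gives $I(U)=G_\delta$. Each slice $\Gamma_\delta(x)=\bigcap_{n\in\mathbb{Z}}\{y:d(f^n(x),f^n(y))\leq\delta\}$ is closed, so $G_\delta$ is a closed (hence Borel) subset of $X\times X$, and Fubini yields
$$
\mu^2(G_\delta)=\int_X \mu(\Gamma_\delta(x))\,d\mu(x).
$$

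For the forward direction, assume $f$ is $\mu$-expansive with constant $\delta$. Then $U=\{(x,y):d(x,y)\leq\delta\}$ is closed (hence compact) in $X\times X$ and contains the open $\delta$-tube around $\Delta$, so it is a compact neighborhood of $\Delta$. The right-hand side of the display above vanishes by hypothesis, so $\mu^2(I(U))=0$, exhibiting $\Delta$ as $\mu^2$-isolated.

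Conversely, suppose $\Delta$ is $\mu^2$-isolated with compact neighborhood $U$. A quick calculation with the product metric shows $d((x,y),\Delta)=d(x,y)$ (the infimum of $d(x,z)+d(z,y)$ over $z\in X$ is attained at $z=x$), so by compactness of $\Delta$ there exists $\delta>0$ with $\{(x,y):d(x,y)\leq\delta\}\subset U$. Then $G_\delta\subset I(U)$, and Fubini gives
$$
\int_X \mu(\Gamma_\delta(x))\,d\mu(x)\leq\mu^2(I(U))=0,
$$
so $\mu(\Gamma_\delta(x))=0$ for $\mu$-almost every $x\in X$. Lemma~\ref{laguna1} then upgrades this to $\mu$-expansiveness (with constant $\delta/2$). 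I do not expect any serious obstacle here: the measurability of $G_\delta$ is immediate, and the only genuinely nontrivial input is Lemma~\ref{laguna1} itself, which handles the passage from \emph{almost every} to \emph{every}. The rest is the bookkeeping needed to identify $I(U)$ with a graph set and invoke Fubini.
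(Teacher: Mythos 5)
Your proof is correct and follows essentially the same route as the paper: identify the set trapped in the $\delta$-tube around $\Delta$ with the graph set $\bigcup_{x}\{x\}\times\Gamma_\delta(x)$, compute its $\mu^2$-measure by Fubini as $\int_X\mu(\Gamma_\delta(x))\,d\mu(x)$, and invoke Lemma~\ref{laguna1} to pass from almost every to every point in the converse. Your observation that an arbitrary compact neighborhood of $\Delta$ contains a closed $\delta$-tube is a small rigor point the paper leaves implicit, but otherwise the two arguments coincide.
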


\begin{proof}
Fix $\delta>0$ and a $\delta$-neighborhood
$U_\delta=\{z\in X\times X:d(z,\Delta)\leq \delta\}$ of $\Delta$.
For simplicity we set $g=f\times f$.

We claim that
\begin{equation}
\label{eqqq1}
\{z\in X\times X:g^n(z)\in U_\delta,\,\,\forall n\in \mathbb{Z}\}=\bigcup_{x\in X}(\{x\}\times \Gamma_\delta(x)).
\end{equation}
In fact, take $z=(x,y)$ in the left-hand side set.
Then,
for all $n\in \mathbb{Z}$ there is $p_n\in X$ such that
$d(f^n(x),p_n)+d(f^n(y),p_n)\leq \delta$
so $d(f^n(x),f^n(y))\leq\delta$ for all $n\in  \mathbb{Z}$ which implies
$y\in\Gamma_\delta(x)$. Therefore $z$ belongs to the right-hand side set.
Conversely, if $z=(x,y)$ is in the right-hand side set
then $d(f^n(x),f^n(y))\leq\delta$ for all $n\in  \mathbb{Z}$
so $d(g^n(x,y),(f^n(x),f^n(x)))=d(f^n(x),f^n(y))\leq \delta$ for all
$n\in  \mathbb{Z}$ which implies that $z$ belongs to the left-hand side set.
The claim is proved.

Let $F$ be the characteristic map of
the left-hand side set in (\ref{eqqq1}).
It follows that $F(x,y)=\chi_{\Gamma_\delta(x)}(y)$ for all $(x,y)\in X\times X$ where $\chi_A$ if the characteristic map of $A\subset X$.
So,
\begin{equation}
\label{eqqq2}
\mu^2(\{z\in X\times X:g^n(z)\in U_\delta,\,\,\forall n\in  \mathbb{Z}\})=
\int_X\int_X  \chi_{\Gamma_\delta(x)}(y)   d\mu(y)d\mu(x).
\end{equation}
Now suppose that $f$ is $\mu$-expansive with constant $\delta$.
It follows that
$$
\int_X  \chi_{\Gamma_\delta(x)}(y)   d\mu(y)=0,
\quad
\quad\forall x\in X
$$
therefore $\mu^2(\{z\in X\times X:g^n(z)\in U_\delta,\,\,\forall n\in  \mathbb{Z}\})=0$
by (\ref{eqqq2}).

Conversely, if $\mu^2(\{z\in X\times X:g^n(z)\in U_\delta,\,\,\forall n\in  \mathbb{Z}\})=0$ for some $\delta>0$,
then (\ref{eqqq2}) implies that $\mu(\Gamma_\delta(x))=0$
for $\mu$-almost every $x\in X$. Then, $f$ is $\mu$-expansive by Lemma \ref{laguna1}.
This ends the proof.
\end{proof}

Our final equivalence is given by using the idea of generators
(see \cite{w}).
Call a finite open covering $\mathcal{A}$ of $X$
{\em $\mu$-generator} of a homeomorphism $f$
if for every bisequence $\{A_n:n\in \mathbb{Z}\}\subset \mathcal{A}$
one has
$$
\mu\left(\bigcup_{n\in \mathbb{Z}}f^n(\mbox{Cl}(A_n))\right)=0.
$$
\begin{theorem}
\label{pp0}
A homeomorphism of $X$ is $\mu$-expansive if and only if it has a $\mu$-generator.
\end{theorem}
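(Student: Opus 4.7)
The plan is to adapt the classical Walters generator argument (Theorem~5.23 of~\cite{w}) by replacing the at-most-one-point conclusion with a measure-zero conclusion. I read the definition of $\mu$-generator as $\mu\bigl(\bigcap_{n\in\mathbb{Z}} f^{-n}(\mathrm{Cl}(A_n))\bigr)=0$ for every bisequence $\{A_n\}\subset\mathcal{A}$, since the union version displayed in the excerpt would be trivially violated as soon as any $A_n$ had positive measure. Throughout I use that $X$ is compact, so finite open covers exist and Lebesgue numbers are available.

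For the direct implication, I would start with a $\mu$-expansiveness constant $\delta>0$ and use compactness to choose a finite open cover $\mathcal{A}$ of $X$ whose elements all have diameter at most $\delta$ (e.g.\ cover $X$ by open $\delta/3$-balls and extract a finite subcover). I then claim $\mathcal{A}$ is a $\mu$-generator: given any bisequence $\{A_n\}\subset\mathcal{A}$, set
\[
E \;=\; \bigcap_{n\in\mathbb{Z}} f^{-n}\!\bigl(\mathrm{Cl}(A_n)\bigr).
\]
If $E=\emptyset$ we are done; otherwise fix any $x\in E$. For every $y\in E$ and $n\in\mathbb{Z}$, both $f^n(x)$ and $f^n(y)$ lie in $\mathrm{Cl}(A_n)$, a set of diameter at most $\delta$, so $d(f^n(x),f^n(y))\le\delta$. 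Thus $E\subset\Gamma_\delta(x)$, and measure-expansiveness gives $\mu(E)\le\mu(\Gamma_\delta(x))=0$.

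For the converse, I would let $\mathcal{A}$ be a $\mu$-generator and use the Lebesgue number lemma to pick $\eta>0$ such that every subset of $X$ of diameter less than $\eta$ lies in some member of $\mathcal{A}$; then set $\delta=\eta/3$ and claim $\delta$ is a $\mu$-expansiveness constant. Fix any $x\in X$. For each $i\in\mathbb{Z}$ the closed ball $\mathrm{Cl}(B(f^i(x),\delta))$ has diameter at most $2\delta<\eta$, hence is contained in some $A_i(x)\in\mathcal{A}$. Any $y\in\Gamma_\delta(x)$ satisfies $f^i(y)\in\mathrm{Cl}(B(f^i(x),\delta))\subset A_i(x)$ for every $i$, giving
\[
\Gamma_\delta(x) \;\subset\; \bigcap_{i\in\mathbb{Z}} f^{-i}\!\bigl(\mathrm{Cl}(A_i(x))\bigr).
\]
The generator hypothesis, applied to the bisequence $\{A_i(x)\}$, forces the right-hand side to have $\mu$-measure zero, so $\mu(\Gamma_\delta(x))=0$ for every $x\in X$ and $f$ is $\mu$-expansive.

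The only real subtlety is matching diameters to the expansiveness constant and to the Lebesgue number of the cover; once the correct small numerical buffer ($\delta=\eta/3$ works) is in place, both inclusions are immediate, and the expansive/generator equivalence transfers verbatim from the pointwise setting to the measure-theoretic one. The typo in the definition of $\mu$-generator should be flagged or silently corrected to $\bigcap f^{-n}(\mathrm{Cl}(A_n))$ so that the statement of the theorem is meaningful.
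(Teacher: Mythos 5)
Your proof is correct and follows essentially the same route as the paper's: a finite cover by small balls for the forward direction (giving $\bigcap_n f^{-n}(\mathrm{Cl}(A_n))\subset\Gamma_\delta(x)$) and a Lebesgue-number argument for the converse (giving $\Gamma_\delta(x)\subset\bigcap_n f^{-n}(\mathrm{Cl}(A_n))$). You are right that the displayed union in the paper's definition is a typo for an intersection, and your bookkeeping of the diameter-versus-radius factors is actually more careful than the paper's, which is off by a factor of $2$ in both directions.
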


\begin{proof}
First suppose that $f$ is a $\mu$-expansive homeomorphism and let $\delta$ be its expansivity constant.
Take $\mathcal{A}$ as the collection of the open $\delta$-balls centered at $x\in X$.
Then, for any bisequence $A_n\in \mathcal{A}$ one has
$$
\bigcap_{n\in \mathbb{Z}}f^n(\mbox{Cl}(A_n))\subset\Gamma_\delta(x),
\quad\quad\forall x\in \bigcap_{n\in \mathbb{Z}}f^n(\mbox{Cl}(A_n)),
$$
so
$$
\mu\left(\bigcap_{n\in \mathbb{Z}}f^n(\mbox{Cl}(A_n))\right)\leq\mu(\Gamma_\delta(x))=0.
$$
Therefore, $\mathcal{A}$ is a $\mu$-generator of $f$.

Conversely, suppose that $f$ has a $\mu$-generator $\mathcal{A}$ and let
$\delta>0$ be a Lebesgue number of $\mathcal{A}$.
If $x\in X$, then for every $n\in \mathbb{Z}$ there is $A_n\in \mathcal{A}$ such that
the closed $\delta$-ball around $f^n(x)$ belongs to Cl$(A_n)$.
It follows that
$$
\Gamma_\delta(x)\subset\bigcap_{n\in \mathbb{N}}f^{-n}(\mbox{Cl}(A_n))
$$
so $\mu(\Gamma_\delta(x))=0$ since $\mathcal{A}$ is a $\mu$-generator.
\end{proof}

\section{Properties}
\label{sec4}

\noindent
In this section we present some properties of $\mu$-expansive homeomorphisms.
For this we introduce some basic notation.
Let $f: X\to X$ be a homeomorphism of a compact metric space $X$.
If $x,y\in X$, $n\in \mathbb{N}^+$ and $m\in \mathbb{N}$ we define
$$
A(x,y,n,m)=\{z\in X:
\max\{d(f^i(z),x),d(f^j(z),y)\}\leq \frac{1}{n},\forall i\leq-m\leq m\leq j\}
$$
and
$$
A(x,y,n)=\bigcup_{m\in \mathbb{N}}A(x,y,n,m).
$$
\begin{lemma}
\label{lemma-reddy0}
These sets satisfy the following properties:
\begin{enumerate}
 \item $A(x,y,n,m)$ is compact;
\item $A(x,y,n,m)\subseteq A(x,y,n,m')$ if $m\leq m'$;
\item $A(x,y,n',m)\subseteq A(x,y,n,m)$ and so $A(x,y,n')\subseteq A(x,y,n)$ if $n\leq n'$.
\end{enumerate}
\end{lemma}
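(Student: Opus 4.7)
All three items are direct consequences of the definition of $A(x,y,n,m)$ as an intersection of preimages of closed balls under powers of $f$, combined with elementary monotonicity. There is no real obstacle here; this lemma sets up notational properties that will be used later. Nonetheless, let me indicate the steps cleanly.

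For (1), the plan is to rewrite
$$
A(x,y,n,m)=\bigcap_{i\leq -m}f^{-i}\bigl(B[x,1/n]\bigr)\,\cap\,\bigcap_{j\geq m}f^{-j}\bigl(B[y,1/n]\bigr).
$$
Since $f$ is a homeomorphism, every power $f^{-i}$ is continuous, and each closed ball $B[\cdot,1/n]$ is closed in $X$. Hence each factor in the intersection is closed, and an arbitrary intersection of closed sets is closed. Since $X$ is compact, $A(x,y,n,m)$ is a closed subset of a compact space, hence compact.

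For (2), I would observe that if $m\leq m'$, then $\{i\in\mathbb{Z}:i\leq -m'\}\subseteq \{i\in\mathbb{Z}:i\leq -m\}$ and $\{j\in\mathbb{Z}:j\geq m'\}\subseteq \{j\in\mathbb{Z}:j\geq m\}$, so the constraint defining $A(x,y,n,m')$ involves strictly fewer indices than that defining $A(x,y,n,m)$. Therefore any $z\in A(x,y,n,m)$ automatically satisfies the weaker conditions, giving $A(x,y,n,m)\subseteq A(x,y,n,m')$.

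For (3), if $n\leq n'$ then $1/n'\leq 1/n$, so every inequality of the form $\max\{d(f^i(z),x),d(f^j(z),y)\}\leq 1/n'$ implies the analogous inequality with $1/n$ in place of $1/n'$. This yields $A(x,y,n',m)\subseteq A(x,y,n,m)$ for each $m\in\mathbb{N}$, and passing to unions over $m$ gives $A(x,y,n')\subseteq A(x,y,n)$, as required.
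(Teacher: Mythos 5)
Your proposal is correct; the paper in fact states Lemma \ref{lemma-reddy0} without proof, and your argument supplies exactly the routine verification that is being omitted. The rewriting of $A(x,y,n,m)$ as $\bigcap_{i\leq -m}f^{-i}(B[x,1/n])\cap\bigcap_{j\geq m}f^{-j}(B[y,1/n])$ is valid (the ``max over all pairs $(i,j)$'' condition decouples because both index sets are nonempty), and the three monotonicity and compactness claims follow just as you say.
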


Given $z\in X$ we define
$\omega(z)$ (resp. $\alpha(z)$) as the set of points $x=\lim_{k\to\infty}f^{n_k}(z)$ for some sequence $n_k\to\infty$ (resp. $n_k\to\infty$).
We say that $z\in X$ is a {\em point with converging semi-orbit under $f$}
if both $\alpha(z)$ and $\omega(z)$ consist of a unique point.
Denote by $A(f)$ the set of points with converging semi-orbits under $f$.
We say that $x\in X$ is a fixed point of $f$
if $f(x)=x$. Denote by Fix$(f)$ the set
of fixed points of $f$.

\begin{lemma}
 \label{lemma-reddy}
For every homeomorphism $f$ of a compact metric space $X$
there is as sequence $x_k\in$ Fix$(f)$ such that
\begin{equation}
\label{eq-reddy}
A(f)=\bigcap_{n\in \mathbb{N}^+}\bigcup_{k,k'\in \mathbb{N}}A(x_k,x_{k'},n).
\end{equation}
\end{lemma}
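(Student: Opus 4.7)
The plan is to exploit two facts: (a) for every $z\in A(f)$, writing $\omega(z)=\{p\}$ and $\alpha(z)=\{q\}$, invariance of these limit sets forces $p,q\in\mathrm{Fix}(f)$; and (b) the compact metric space $X$ is separable, and hence so is the closed subset $\mathrm{Fix}(f)$, which therefore admits a countable dense subset $\{x_k\}_{k\in\mathbb{N}}$. If $\mathrm{Fix}(f)=\emptyset$ then (a) forces $A(f)=\emptyset$ and both sides of (\ref{eq-reddy}) are empty, so we may assume $\mathrm{Fix}(f)\neq\emptyset$ and take $\{x_k\}$ as above; this is the required sequence.

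For the inclusion $\subseteq$, fix $z\in A(f)$ and write $\omega(z)=\{p\}$, $\alpha(z)=\{q\}$. Given $n\in\mathbb{N}^+$, density produces $k,k'\in\mathbb{N}$ with $d(q,x_k)\leq\tfrac{1}{2n}$ and $d(p,x_{k'})\leq\tfrac{1}{2n}$. Convergence of the semiorbits yields $m\in\mathbb{N}$ with $d(f^i(z),q)\leq\tfrac{1}{2n}$ for all $i\leq -m$ and $d(f^j(z),p)\leq\tfrac{1}{2n}$ for all $j\geq m$. The triangle inequality then gives $z\in A(x_k,x_{k'},n,m)\subseteq A(x_k,x_{k'},n)$, and since $n$ was arbitrary $z$ belongs to the right-hand side of (\ref{eq-reddy}).

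For the inclusion $\supseteq$, fix $z$ in the right-hand side and, for each $n\in\mathbb{N}^+$, choose $k_n,k'_n\in\mathbb{N}$ and $m_n\in\mathbb{N}$ with $z\in A(x_{k_n},x_{k'_n},n,m_n)$, so that $d(f^i(z),x_{k_n})\leq\tfrac{1}{n}$ for $i\leq -m_n$ and $d(f^j(z),x_{k'_n})\leq\tfrac{1}{n}$ for $j\geq m_n$. Picking a common index $j\geq\max\{m_n,m_{n'}\}$ and applying the triangle inequality shows that $(x_{k'_n})_n$ is Cauchy, hence converges to some $p\in X$; an identical argument gives $x_{k_n}\to q$ for some $q\in X$. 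Finally, any $y\in\omega(z)$ arises as $y=\lim_\ell f^{j_\ell}(z)$ with $j_\ell\to\infty$, and for each fixed $n$ one may take $\ell$ large with $j_\ell\geq m_n$ to deduce $d(y,x_{k'_n})\leq\tfrac{1}{n}$; letting $n\to\infty$ forces $y=p$, so $\omega(z)=\{p\}$. Analogously $\alpha(z)=\{q\}$, whence $z\in A(f)$.

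The main hurdle is conceptual rather than computational: one has to recognize that $A(f)$ is naturally indexed by pairs $(q,p)\in\mathrm{Fix}(f)\times\mathrm{Fix}(f)$ of ``past'' and ``future'' limits, and that arbitrary such pairs can be replaced by approximants drawn from a fixed countable dense subset of $\mathrm{Fix}(f)$ without losing membership in $A(f)$. Once that observation is in hand, the remainder is a standard diagonal/double-approximation argument with no serious technical obstacles.
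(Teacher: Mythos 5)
Your proof is correct and follows essentially the same route as the paper: a countable dense sequence in the closed set $\mathrm{Fix}(f)$, the triangle inequality for the forward inclusion, and a limit-extraction argument for the reverse inclusion. The only (cosmetic) difference is that you show $(x_{k_n})$ and $(x_{k'_n})$ are Cauchy via a common large time index, whereas the paper extracts convergent subsequences by compactness; you also explicitly handle the case $\mathrm{Fix}(f)=\emptyset$, which the paper leaves implicit.
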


\begin{proof}
We have that Fix$(f)$ is compact since $f$ is continuous. It follows that
there is a sequence $x_k$ in Fix$(f)$
which is dense in Fix$(f)$. We shall prove that this sequence satisfies (\ref{eq-reddy}).

Take $z\in A(f)$. Then, there are $x,y\in X$ such that $\alpha(z)=x$ and $\omega(z)=y$.
Fix $n\in \mathbb{N}^+$. Then, there is
$m\in \mathbb{N}$ such that
$\max\{d(f^i(z),x),d(f^j(y),y)\}\leq \frac{1}{2n}$ whenever $i\leq -m\leq m\leq j$.
But clearly $x,y\in$ Fix$(f)$, so there are $k,k'\in \mathbb{N}$ such that
$\max\{d(x,x_k),d(y,x_{k'})\}\leq\frac{1}{2n}$.
Hence,
$$
\max\{d(f^i(z),x_k),d(f^j(z),x_{k'})\}\leq \frac{1}{n},\quad\quad\forall i\leq -m\leq m\leq j
$$
therefore
$z\in A(x_k,x_{k'},n,m)$. We have then proved that
for all $n\in \mathbb{N}^+$ there are $k,k'\in \mathbb{N}$ such that
$z\in A(x_k,x_{k'},n)$ thus $A(f)$ is contained in the right-hand side set of (\ref{eq-reddy}).

Conversely if $z$ belongs to the right-hand side set of (\ref{eq-reddy}), then
there are sequences $k_n,k'_n,m_n\in \mathbb{N}$ such that
$$
\max\{d(f^i(z),x_{k_n}),d(f^j(z),x_{k_n'})\}\leq \frac{1}{n},
\quad\quad\forall i\leq -m_n\leq m_n\leq j.
$$
By compactness there is a sequence $n_r\to\infty$ such that
$x_{k_{n_r}}\to x$ and $x_{k_{n_r}'}\to x'$ for some fixed points $x,x'$ of $f$.
We assert that $\alpha(z)=x$ and $\omega(z)=x'$.
Take $\epsilon>0$. Then, there is $r_1\in \mathbb{N}$ such that
$\frac{1}{n_r}\leq\frac{\epsilon}{2}$ and
$\max\{d(x_{k_{n_r}},x),d(x_{k'_{n_r}},x')\}\leq \frac{\epsilon}{2}$ for all $r\geq r_1$.
Then, for all $r\geq r_1$ and $i\leq -m_{n_{r}}\leq m_{n_{r}}\leq j$ one has
$d(f^i(z),x)\leq d(f^i(z),x_{k_{n_r}})+d(x_{k_{n_r}},x)\leq \frac{\epsilon}{2}+\frac{\epsilon}{2}=\epsilon$
and, further, $d(f^j(z),x')\leq d(f^j(z),x_{k_{n_r}'})+d(x_{k_{n_r}'},x')\leq \frac{\epsilon}{2}+\frac{\epsilon}{2}=\epsilon$
proving the assertion.

From this assertion we have $z\in A(f)$ then (\ref{eq-reddy}) holds.
\end{proof}

Hereafter we denote by $B[x,\delta]$ (resp. $B(x,\delta)$)
the closed (resp. open) $\delta$-ball of $X$ around $x$.

The following represents the measure-expansive version of a result in \cite{r}.

\begin{theorem}
\label{reddy}
If $\mu$ is a Borel probability measure of a compact metric space $X$ and
$f: X\to X$ is a $\mu$-expansive homeomorphism, then the set
of points with converging semi-orbits under $f$ has $\mu$-measure $0$.
\end{theorem}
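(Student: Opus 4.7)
The plan is to combine the structural decomposition from Lemma \ref{lemma-reddy} with a uniform continuity argument that converts the $\mu$-expansiveness constant $\delta$ into a null-set estimate for each piece $A(x,y,n,m)$. Let $(x_k)\subset\mathrm{Fix}(f)$ be the dense sequence furnished by Lemma \ref{lemma-reddy}. Because
$$A(f)\subset\bigcup_{k,k'\in\mathbb{N}}A(x_k,x_{k'},n_0)=\bigcup_{k,k'\in\mathbb{N}}\bigcup_{m\in\mathbb{N}}A(x_k,x_{k'},n_0,m)$$
for any choice of $n_0\in\mathbb{N}^+$, countable subadditivity reduces the theorem to showing that $\mu(A(x,y,n_0,m))=0$ for every pair of fixed points $x,y$, every $m\in\mathbb{N}$, and a single $n_0$ to be chosen in terms of $\delta$.

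Fix $n_0$ with $2/n_0<\delta$. If $z_1,z_2\in A(x,y,n_0,m)$ then for each index $i\leq -m$ both $f^i(z_1)$ and $f^i(z_2)$ lie within $1/n_0$ of $x$, and similarly for $j\geq m$ both lie within $1/n_0$ of $y$; the triangle inequality therefore gives $d(f^i(z_1),f^i(z_2))\leq 2/n_0<\delta$ for all $|i|\geq m$. The genuine obstacle, and the heart of the argument, is the central window $|i|<m$, where the definition of $A(x,y,n_0,m)$ affords no direct bound.

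To handle that window I would invoke equicontinuity of the finite family $\{f^i:|i|\leq m\}$ on the compact space $X$: there exists $\eta=\eta(m)>0$ such that $d(w_1,w_2)<\eta$ implies $d(f^i(w_1),f^i(w_2))\leq\delta$ for all $|i|\leq m$. Cover $X$ by finitely many open balls $B(p_1,\eta/2),\ldots,B(p_N,\eta/2)$ and write $A(x,y,n_0,m)=\bigcup_{j=1}^{N}A_j$ with $A_j:=A(x,y,n_0,m)\cap B(p_j,\eta/2)$. Any two points of a single $A_j$ are within distance $\eta$, so combining the equicontinuity bound on $|i|<m$ with the tail bound at $|i|\geq m$ yields $A_j\subset\Gamma_\delta(z)$ for any chosen $z\in A_j$. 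Hence $\mu(A_j)\leq\mu(\Gamma_\delta(z))=0$ by $\mu$-expansiveness, and $A(x,y,n_0,m)$ is null as a finite union of null sets, completing the chain of subadditivity reductions. The only subtlety is that $\eta$ depends on $m$, but since the finite cover is performed for each $m$ separately, this dependence is harmless.
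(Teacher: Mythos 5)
Your proof is correct and follows essentially the same route as the paper: the decomposition of $A(f)$ from Lemma \ref{lemma-reddy}, reduction to the compact pieces $A(x_k,x_{k'},n,m)$, and the same two-regime estimate (proximity to the fixed points $x_k,x_{k'}$ for $|i|\geq m$, uniform continuity of the finite family $f^{-m},\dots,f^{m}$ for $|i|<m$) showing that each piece sits locally inside a set of the form $\Gamma_\delta(z)$. The only difference is in the finish and it is cosmetic: the paper argues by contradiction and extracts a positive-measure ball around a point of supp$(\mu)$ inside one piece, whereas you argue directly by covering each piece with finitely many sets of diameter less than $\eta$; both are valid.
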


\begin{proof}
Recall that $A(f)$ denotes the set of points with converging semi-orbits under $f$.
To prove $\mu(A(f))=0$ we assume by contradiction that $\mu(A(f))>0$.
By Lemma \ref{lemma-reddy} there is a sequence of fixed points $x_k$ of $f$
satisfying (\ref{eq-reddy}).
From this we obtain
\begin{equation}
\label{eq-reddy2}
\mu\left(\bigcup_{k,k'\in \mathbb{N}}A(x_k,x_{k'},n)\right)>0,
\quad\quad\forall n\in \mathbb{N}^+.
\end{equation}
Fix an expansivity constant $e$ of $f$. Fix
$n\in \mathbb{N}$ such that $\frac{1}{n}\leq \frac{e}{2}$.
Applying (\ref{eq-reddy2}) to this $n$ we can arrange
$k,k'\in \mathbb{N}$ such that
$\mu(A(x_k,x_{k'},n))>0$.
On the other hand, by Lemma \ref{lemma-reddy0}-(3), the definition of $A(x,y,n)$ and well known properties of measurable spaces we have
$$
\mu(A(x_k,x_{k'},n))=\sup_{m\in \mathbb{N}}A(x_k,x_{k'},n,m).
$$
Since $\mu(A(x_k,x_{k'},n))>0$, we can arrange $m\in \mathbb{N}$ satisfying
$$
\mu(A(x_k,x_{k'},n,m))>0.
$$
From this and the fact that
$A(x_k,x_{k'},n,m)$ is compact by Lemma \ref{lemma-reddy0}-(1)
we can select $z\in A(x_k,x_{k'},n,m)\cap$ supp$(\mu)$ and $\delta_z>0$ such that
$$
\mu(A(x_k,x_{k'},n,m)\cap B[z,\delta'])>0,
\quad\quad\forall 0<\delta'<\delta_z.
$$
But $f$ is continuous and the pair $(n,m)$ is fixed, so,
there is $0<\delta'<\delta_z$ such that
$$
d(f^i(z),f^i(w))\leq \frac{e}{2},
\quad\quad\forall -m\leq i\leq m, \forall w\in B[z,\delta'].
$$
Consider $w\in A(x_k,x_{k'},n,m)\cap B[z,\delta']$.
On the one hand, since $w\in B[z,\delta']$ we have
$d(f^i(w),f^i(z))\leq e$ for all $-m\leq i\leq m$
and, on the other, since $z,w\in A(x_k,x_{k'},n,m)$
and $\frac{1}{n}\leq\frac{e}{2}$ we have
$d(f^i(w),f^i(z))\leq d(f^i(w),x_k)+d(f^i(z),x_k)\leq e$
and $d(f^j(w),f^j(z))\leq d(f^j(w),x_{k'})+d(f^j(z),x_{k'})\leq e$
for all $i\leq -m\leq m\leq j$. This proves $w\in \Gamma_e(z)$ so
$$
A(x_k,x_{k'},n,m)\cap  B[z,\delta']\subset \Gamma_e(z).
$$
It follows that
$$
\mu(\Gamma_e(x))\geq \mu(A(x_k,x_{k'},n,m)\cap B[z,\delta'])>0
$$
which contradicts the $\mu$-expansiveness of $f$.
This ends the proof.
\end{proof}

A direct corollary is the following $\mu$-expansive version of Theorem 3.1 in \cite{u}.
Denote by Per$(f)$ the set of periodic points of $f$.

\begin{corollary}
\label{thA}
If $f$ is a $\mu$-expansive homeomorphism for some Borel probability measure $\mu$,
then $\mu($Per$(f))=0$.
\end{corollary}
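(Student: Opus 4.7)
The plan is to reduce the corollary to Theorem \ref{reddy} via iterates of $f$. The subtlety to be aware of is that a periodic point of period $k > 1$ is not itself a point with converging semi-orbit under $f$, because its $\omega$-limit and $\alpha$-limit sets coincide with the full (finite, $k$-element) orbit rather than a single point. So one cannot simply write $\text{Per}(f) \subseteq A(f)$; we have to pass to a power of $f$.

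First I would fix $k \in \mathbb{N}^+$ and consider $f^k$. By Proposition \ref{pp2}, $f^k$ is $\mu$-expansive, and so Theorem \ref{reddy} applies to $f^k$ to give $\mu(A(f^k)) = 0$. Next I would observe the elementary inclusion
\[
\text{Fix}(f^k) \subseteq A(f^k),
\]
since for any $x$ with $f^k(x)=x$ the orbit $\{f^{kn}(x):n\in\mathbb{Z}\}$ is the singleton $\{x\}$, hence $\alpha(x)=\omega(x)=\{x\}$ when one iterates by $f^k$. Consequently $\mu(\text{Fix}(f^k)) = 0$ for every $k \in \mathbb{N}^+$.

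Finally, I would write
\[
\text{Per}(f) = \bigcup_{k \in \mathbb{N}^+} \text{Fix}(f^k),
\]
a countable union, and conclude by countable subadditivity of $\mu$ that $\mu(\text{Per}(f)) = 0$. The only step requiring any real content is the invocation of Theorem \ref{reddy} on $f^k$, which in turn relies on the invariance of $\mu$-expansiveness under passage to iterates (Proposition \ref{pp2}); there is no genuine obstacle beyond noticing that one must use $f^k$ rather than $f$ itself to apply the converging semi-orbit result to periodic points of period $k$.
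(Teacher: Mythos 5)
Your proof is correct and follows essentially the same route as the paper: write $\mathrm{Per}(f)=\bigcup_{k\in\mathbb{N}^+}\mathrm{Fix}(f^k)$, use Proposition \ref{pp2} to make $f^k$ $\mu$-expansive, apply Theorem \ref{reddy} to $f^k$ to get $\mu(\mathrm{Fix}(f^k))=0$, and finish by countable subadditivity. Your explicit remark about why one must pass to $f^k$ rather than apply the converging semi-orbit result to $f$ directly is a point the paper leaves implicit, but the argument is the same.
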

 
\begin{proof}
Recalling Fix$(f)=\{x\in X:f(x)=x\}$ we have
Per$(f)=\cup_{n\in \mathbb{N}^+}$Fix$(f^n)$.
Now, $f^n$ is $\mu$-expansive by Proposition \ref{pp2}
and every element of Fix$(f^n)$ is a point with converging semi-orbits of $f^n$
thus $\mu($Fix$(f^n))=0$ for all $n$ by Theorem \ref{reddy}.
Therefore,
$\mu($Per$(f))\leq \sum_{n\in \mathbb{N}^+}\mu($Fix$(f^n))=0$.
\end{proof}

We finish this section by describing $\mu$-expansiveness in dimension one.
To start with we prove that there are no $\mu$-expansive homeomorphisms of compact intervals.

\begin{theorem}
\label{thD}
There are no $\mu$-expansive homeomorphisms of a compact interval $I$ for all Borel probability measure $\mu$ of $I$.
\end{theorem}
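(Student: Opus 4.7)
The plan is to reduce to Theorem \ref{reddy} by showing that on a compact interval, \emph{every} point has a converging semi-orbit under any homeomorphism, whence $A(f) = I$ and $\mu(A(f)) = 1$, contradicting $\mu$-expansiveness.

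First I would reduce to the order-preserving case. A homeomorphism $f\colon I\to I$ of a compact interval is either increasing or decreasing. If $f$ is decreasing, then $f^2$ is increasing, and by Proposition \ref{pp2} the homeomorphism $f$ is $\mu$-expansive if and only if $f^2$ is. So it suffices to derive a contradiction assuming $f$ is an increasing $\mu$-expansive homeomorphism of $I=[a,b]$.

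The key observation is that for an increasing homeomorphism $f$ of $[a,b]$, the endpoints $a$ and $b$ are fixed, and for every $x\in I$ the sequences $\{f^n(x)\}_{n\ge 0}$ and $\{f^n(x)\}_{n\le 0}$ are monotonic. Indeed, $f$ preserves the order relation between $x$ and $f(x)$: if $f(x)\geq x$ then $f^{n+1}(x)\geq f^n(x)$ for all $n\in\mathbb{Z}$, and the reverse inequality if $f(x)\leq x$. Hence both the forward and the backward orbit of $x$ are bounded monotonic sequences in $I$, so they converge to limits $\omega(x)$ and $\alpha(x)$, which are automatically fixed points of $f$ by continuity. Consequently $A(f)=I$.

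Since $\mu$ is a Borel probability measure we have $\mu(A(f))=\mu(I)=1$, but Theorem \ref{reddy} forces $\mu(A(f))=0$ under the $\mu$-expansiveness hypothesis. This contradiction completes the argument. No step here looks delicate: the only thing to double-check is the monotonicity of forward and backward orbits of increasing homeomorphisms of an interval, which is elementary.
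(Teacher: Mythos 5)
Your proof is correct, and while it rests on the same key lemma as the paper's argument (Theorem \ref{reddy}), the route is genuinely different in its decomposition. The paper splits $I$ into Fix$(f)$ and its complement: it applies Corollary \ref{thA} to conclude $\mu(\mathrm{Fix}(f))=0$ and Theorem \ref{reddy} to the countably many complementary open intervals, on which every point is claimed to have converging semi-orbits. You instead first pass to $f^2$ via Proposition \ref{pp2} to assume $f$ is increasing, then observe that \emph{every} orbit of an increasing interval homeomorphism is monotone in both time directions, so $A(f)=I$ and a single application of Theorem \ref{reddy} gives $1=\mu(I)=\mu(A(f))=0$. This buys you two things: you avoid Corollary \ref{thA} entirely (fixed points are trivially in $A(f)$ anyway), and you handle the orientation-reversing case explicitly. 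The latter is not a cosmetic point: for a decreasing homeomorphism such as $x\mapsto -x$ on $[-1,1]$, points off the fixed point are period-two and do \emph{not} have converging semi-orbits, so the paper's assertion that every point of a complementary interval $J$ lies in $A(f)$ tacitly requires the reduction to the increasing case that you carry out. Your version is therefore the more careful one; the only detail worth spelling out is the induction showing $f(x)\geq x$ implies $f^{n+1}(x)\geq f^n(x)$ for all $n\in\mathbb{Z}$ (using that $f^{-1}$ is also increasing), which you correctly flag as elementary.
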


\begin{proof}
Suppose by contradiction that there is a $\mu$-expansive homeomorphism $f$ of $I$
for some Borel probability measure $\mu$ of $I$. Since
$f$ is continuous we have that Fix$(f)\neq \emptyset$.
Such a set is also closed since $f$ is continuous, so,
its complement $I\setminus$Fix$(f)$ in $I$ consists of countably many open intervals $J$.
It is also clear that every point in $J$ is a point with converging
semi-orbits therefore $\mu(I\setminus$ Fix$(f))=0$ by Theorem \ref{reddy}.
But $\mu($Fix$(f))=0$ by Corollary \ref{thA} so
$\mu(I)=\mu($Fix$(f))+\mu(I\setminus$ Fix$(f))=0$ which is absurd.
\end{proof}

Now we consider the circle $S^1$.
Recall that an orientation-preserving homeomorphism of the circle $S^1$ is {\em Denjoy}
if it is not topologically conjugated to a rotation \cite{hk}.

\begin{theorem}
\label{circle1}
A homeomorphism of $S^1$ is $\mu$-expansive for some Borel probability measure $\mu$ if and only if
it is Denjoy.
\end{theorem}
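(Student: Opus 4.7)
The plan is to establish each implication using the tools built up earlier in the paper.

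For the only-if direction, assume $f$ is $\mu$-expansive for some Borel probability $\mu$ on $S^1$. I would rule out periodic points first: if $f$ has a periodic point, some iterate $f^q$ is an orientation-preserving homeomorphism of $S^1$ with fixed points (take $q$ equal to the period, doubled if $f$ is orientation-reversing, since orientation-reversing homeomorphisms of $S^1$ always have fixed points). Classical circle dynamics imply that every orbit of $f^q$ is either a fixed point or has both its $\alpha$- and $\omega$-limit equal to a single fixed endpoint of the arc it lies in, so $S^1=\mathrm{Fix}(f^q)\cup A(f^q)$. Proposition~\ref{pp2} makes $f^q$ $\mu$-expansive, and Corollary~\ref{thA} together with Theorem~\ref{reddy} then give $\mu(S^1)\le\mu(\mathrm{Per}(f^q))+\mu(A(f^q))=0$, a contradiction. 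Hence $f$ has no periodic orbit, is orientation-preserving, and has irrational rotation number $\alpha$. If in addition $f$ were topologically conjugate to the rotation $R_\alpha$, Example~\ref{conjugacy} would make $R_\alpha$ measure-expansive, contradicting Example~\ref{isometry}. So $f$ is Denjoy.

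For the if direction, assume $f$ is Denjoy with unique minimal Cantor set $K$ and Poincar\'e semi-conjugacy $h:S^1\to S^1$ satisfying $h\circ f=R_\alpha\circ h$ and collapsing each wandering interval to a single point. Let $\nu$ be the unique $f$-invariant Borel probability measure on $K$; it is non-atomic because $h_*\nu$ equals Lebesgue measure on $S^1$ and $h$ is at most two-to-one on $K$, so any $\nu$-atom would yield a Lebesgue atom. Define $\mu(A):=\nu(A\cap K)$ for Borel $A\subseteq S^1$. By Example~\ref{semilocal} it suffices to show that $f|_K$ is $\nu$-expansive. Fix a wandering interval $I_0$ of length $\ell:=|I_0|>0$ and some $\delta\in(0,\ell)$. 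I would show that for every $x\in K$,
\[
\Gamma_\delta^{f|_K}(x)\;\subseteq\;h^{-1}(h(x))\cap K,
\]
whose right-hand side is at most a pair of gap-endpoints of $K$; non-atomicity of $\nu$ then forces $\nu(\Gamma_\delta^{f|_K}(x))=0$.

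The key inclusion is argued by contradiction: given $y\in K$ with $h(y)\neq h(x)$, density of the $R_\alpha$-orbit of the single point $h(I_0)\in S^1$ yields some $n\in\mathbb{Z}$ with $R_\alpha^n(h(I_0))$ in the open rotation arc between $h(x)$ and $h(y)$. Monotonicity of $h$ then places $f^n(I_0)$ inside an $S^1$-arc joining $f^n(x)$ and $f^n(y)$, and applying $f^{-n}$ yields an arc joining $f^{-n}(x)$ and $f^{-n}(y)$ that contains $I_0$ and therefore has length at least $\ell>\delta$; so $\sup_n d(f^n(x),f^n(y))>\delta$ and $y\notin\Gamma_\delta^{f|_K}(x)$. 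The principal obstacle is the geometric bookkeeping in this step: one must verify that an appropriate iterate $n$ can be chosen so that the arc containing $I_0$ is the \emph{shorter} of the two $S^1$-arcs joining $f^{-n}(x)$ and $f^{-n}(y)$, so that its length really equals $d(f^{-n}(x),f^{-n}(y))$ rather than the circumference minus it. This may require picking $I_0$ of length less than half the circumference and using additional density considerations to land $n$ in the favorable configuration. Boundary cases where $x$ or $y$ is an endpoint of a wandering interval form a countable exceptional set and can be absorbed into the $\mu$-almost-every reformulation of measure-expansiveness furnished by Lemma~\ref{laguna1}.
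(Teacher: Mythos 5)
Your ``only if'' direction is correct and is essentially the paper's argument in a slightly more direct form: the paper rules out periodic points by cutting $S^1$ open at a fixed point of $f^q$ and invoking Theorem~\ref{thD}, whose proof is exactly your decomposition $S^1=\mathrm{Fix}(f^q)\cup A(f^q)$ followed by Theorem~\ref{reddy} and Corollary~\ref{thA}; the rotation case is handled identically via Examples~\ref{conjugacy} and~\ref{isometry}. Your ``if'' direction, however, genuinely diverges: the paper simply equips the minimal Cantor set $\Delta$ with a nonatomic measure and quotes the fact that $f|_\Delta$ is expansive, then applies Example~\ref{semilocal}, whereas you attempt a self-contained proof of the separation via the Poincar\'e semiconjugacy, aiming at the inclusion $\Gamma_\delta^{f|_K}(x)\subseteq h^{-1}(h(x))\cap K$. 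That target is the right one (and is actually more robust than the paper's quoted claim, since it tolerates Denjoy maps whose gap orbits have maximal lengths tending to zero), but your execution of it has a real gap.

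The gap is precisely the one you flag, and your proposed remedy does not close it. Placing the single gap $I_0$ inside the arc $A_n$ joining $f^n(x)$ to $f^n(y)$ only forces $|A_n|\geq\ell$; the distance $d(f^n(x),f^n(y))$ is $\min(|A_n|,|B_n|)$ where $B_n$ is the \emph{complementary} arc, and nothing in the argument prevents $|B_n|<\delta$ for every such $n$. Shrinking $I_0$ or ``additional density considerations'' for a single point cannot help, because the obstruction lives in the arc that does \emph{not} contain $I_0$. The fix is to seed \emph{both} arcs simultaneously: take two gaps of definite length, say $I_0$ and $I_1=f(I_0)$, set $\delta<\min(|I_0|,|I_1|)$, write $p=h(I_0)$, and look for $n$ with $R_\alpha^{-n}(p)\in\mathrm{int}\,h(A)$ and $R_\alpha^{-n+1}(p)\in\mathrm{int}\,h(B)$; then $I_0\subseteq A_n$ and $I_1\subseteq B_n$, so both arcs have length $>\delta$ and $d(f^n(x),f^n(y))>\delta$. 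Such an $n$ exists because the required condition is $R_\alpha^{-n}(p)\in \mathrm{int}\,h(A)\cap R_\alpha^{-1}(\mathrm{int}\,h(B))$, and these two open arcs have lengths summing to the full circumference, hence intersect unless $R_\alpha^{-1}$ carries $\mathrm{int}\,h(B)$ exactly onto its complement's complement, i.e.\ fixes it, which is impossible for irrational $\alpha$; density of the orbit of $p$ then finishes. With this repair your inclusion holds for every $x\in K$ (the countable endpoint set needs no special treatment), and the rest of your argument---nonatomicity of $\nu$, finiteness of $h^{-1}(h(x))\cap K$, and Example~\ref{semilocal}---goes through.
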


\begin{proof}
Let $f$ be a Denjoy homeomorphism of $S^1$.
As is well known $f$ has no periodic points and exhibits a unique minimal set
$\Delta$ which is a Cantor set \cite{hk}.
In particular, $\Delta$ is compact without isolated points thus it exhibits a nonatomic Borel probability meeasure $\nu$ (c.f. Corollary 6.1 in \cite{prv}).
On the other hand, one sees as in Example 1.2 of \cite{cl} that $f/\Delta$ is expansive so
it is $\nu$-expansive too.
Then, we are done by Example \ref{semilocal}.

Conversely, let $f$ be a $\mu$-expansive homeomorphism of $S^1$, for some $\mu$,
and suppose by contradiction that it is not Denjoy.
Then, either $f$ has periodic points or is conjugated to a rotation (c.f. \cite{hk}).
In the first case we can assume by Proposition \ref{pp2} that
$f$ has a fixed point.
Then, we can cut open $S^1$ along the fixed point to obtain a $\nu$-expansive
homeomorphism of $I$ for some Borel probability measure $\nu$ which contradicts Theorem \ref{thD}.
In the second case we have that
$f$ is conjugated to a rotation.
Since $f$ is $\mu$-expansive it would follow from
Example \ref{conjugacy} that there are $\nu$-expansive circle rotations for some Borel probabilities $\nu$.
However, such rotations cannot exist by Example \ref{isometry} since they are isometries.
This contradiction proves the result.
\end{proof}

In particular, there are no $C^2$ $\mu$-expansive diffeomorphisms of $S^1$ for all Borel probability measure $\mu$ of $S^1$. Similarly, there are no $C^1$ $\mu$-expansive diffeomorphisms of $S^1$ with derivative of bounded variation.

\section{Probabilistic proofs in expansive systems}

\noindent
The goal of this short section is to present the proof of some results in expansive systems
using the ours.

To start with we obtain another proof of the following result due to Utz (see Theorem 3.1 in \cite{u}).

\begin{corollary}
\label{utz1}
The set of periodic points of an expansive homeomorphism of a compact metric space is
countable.
\end{corollary}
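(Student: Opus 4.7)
The plan is to argue by contradiction using the measure-theoretic results already established. Assume $f: X \to X$ is expansive and that $\mathrm{Per}(f)$ is uncountable. Since $\mathrm{Per}(f) = \bigcup_{n \in \mathbb{N}^+} \mathrm{Fix}(f^n)$ is a countable union, at least one $\mathrm{Fix}(f^n)$ must be uncountable. Because $f^n$ is continuous, $\mathrm{Fix}(f^n)$ is closed in $X$, hence compact, and in particular it is an uncountable complete separable metric space.

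Next I would invoke the fact (cited in the paper via \cite{prv} and used in Example \ref{ex1} and in the proof of Theorem \ref{circle1}) that every uncountable complete separable metric space admits a nonatomic Borel probability measure. This supplies a nonatomic Borel probability $\nu$ on $\mathrm{Fix}(f^n)$. Push this forward to $X$ by setting $\mu(A) = \nu(A \cap \mathrm{Fix}(f^n))$ for every Borel subset $A \subseteq X$; then $\mu$ is a nonatomic Borel probability on $X$ with $\mu(\mathrm{Fix}(f^n)) = 1$.

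Since $f$ is expansive and $\mu$ is nonatomic, Example \ref{ex1} tells us that $f$ is $\mu$-expansive. Applying Corollary \ref{thA} to this $\mu$ gives $\mu(\mathrm{Per}(f)) = 0$. On the other hand, $\mathrm{Fix}(f^n) \subseteq \mathrm{Per}(f)$, so $\mu(\mathrm{Per}(f)) \geq \mu(\mathrm{Fix}(f^n)) = 1$, which is the desired contradiction. Hence $\mathrm{Per}(f)$ is countable.

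The only nontrivial ingredient beyond the paper's own results is the existence of a nonatomic Borel probability on an uncountable compact metric space, but this is a standard fact already used elsewhere in the paper, so there is no real obstacle. The entire argument is essentially a bookkeeping exercise reducing the topological statement of Utz to the probabilistic Corollary \ref{thA}.
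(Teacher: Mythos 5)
Your proof is correct and follows essentially the same route as the paper's: reduce to some uncountable $\mathrm{Fix}(f^n)$, use the Parthasarathy--Ranga Rao--Varadhan result to place a nonatomic Borel probability on it, extend it to $X$, and contradict Corollary \ref{thA} via Example \ref{ex1}. No substantive differences.
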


\begin{proof}
Let $f$ be an expansive homeomorphism of a compact metric space $X$.
Since Per$(f)=\cup_{n\in \mathbb{N}^+}$Fix$(f^n)$
it suffices to prove that Fix$(f^n)$ is countable for all $n\in \mathbb{N}^+$.
Suppose by contradiction that Fix$(f^n)$ is uncountable for some $n$.
Since $f$ is continuous we have that Fix$(f^n)$ is also closed, so,
it is complete and separable with respect to the induced topology. Thus, by
Corollary 6.1 p. 210 in \cite{prv}, there is a nonatomic Borel probability measure
$\nu$ in Fix$(f^n)$.
Taking $\mu(A)=\nu(Y\cap A)$ for all borelian $A$ of $X$ we obtain
a nonatomic Borel probability measure $\mu$ of $X$ satisfying $\mu($Fix$(f^n))=1$.
Since Fix$(f^n)\subset$ Per$(f)$ we conclude that $\mu($Per$(f))=1$.
However, $f$ is expansive and $\mu$ is nonatomic so $f$ is $\mu$-expansive
thus $\mu($Per$(f))=0$ by Corollary \ref{thA} contradiction.
This contradiction yields the result.
\end{proof}

Next we obtain another proof of
the following result by Jacobson and Utz \cite{ju} (details in \cite{bry}).

\begin{corollary}
There are no expansive homeomorphisms of a compact interval.
\end{corollary}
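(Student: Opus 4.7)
The plan is to reduce this statement directly to Theorem \ref{thD}, which asserts there are no $\mu$-expansive homeomorphisms of a compact interval $I$ for any Borel probability measure $\mu$ of $I$. The whole section is dedicated to recovering results on expansive systems from their measure-expansive analogues, so the structure should mirror that of Corollary \ref{utz1}: produce a nonatomic measure on $I$, observe that expansiveness automatically upgrades to $\mu$-expansiveness in the presence of a nonatomic measure, and then invoke the interval result.

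More concretely, suppose for a contradiction that $f : I \to I$ is an expansive homeomorphism of a compact interval $I$. Since $I$ is an uncountable complete separable metric space, the result cited in Example \ref{ex1} (namely Corollary 6.1 of \cite{prv}) provides a nonatomic Borel probability measure $\mu$ on $I$. Because $\mu$ is nonatomic and $f$ is expansive, Example \ref{ex1} yields that $f$ is $\mu$-expansive. This contradicts Theorem \ref{thD}, which forbids the existence of any $\mu$-expansive homeomorphism on a compact interval, and the conclusion follows.

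There is essentially no obstacle in this argument, since both ingredients have already been established in the paper; the only tiny verification is that a compact interval is indeed uncountable, complete, and separable, which is standard. The whole point of the section is that, once the measure-expansive machinery is in place, classical results like this one follow with almost no extra work. Thus the proof will be a short two- or three-line contradiction, exactly parallel to Corollary \ref{utz1} above.
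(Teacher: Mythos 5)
Your proposal is correct and follows essentially the same route as the paper: produce a nonatomic Borel probability measure on $I$, upgrade expansiveness to $\mu$-expansiveness via Example \ref{ex1}, and contradict Theorem \ref{thD}. The only cosmetic difference is that the paper simply takes $\mu$ to be the Lebesgue measure on $I$ rather than invoking the general existence result from \cite{prv}.
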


\begin {proof}
Suppose by contradiction that there is an expansive homeomorphism of a compact interval $I$.
Since the Lebesgue measure $Leb$ of $I$ is nonatomic we obtain that $f$ is $Leb$-expansive.
However, there are no such homeomorphisms by Theorem \ref{thD}.
\end {proof}

The following lemma is motivated by the well known property that for every homeomorphism $f$ of a compact metric space $X$ one has that
supp$(\mu)\subset \Omega(f)$ for all $f$-invariant Borel probability measure $\mu$ of $X$.
Inded, we shall prove that this is true also for all $\mu$-expansive
homeomorphisms $f$ of $S^1$ even for non $f$-invariant measures $\mu$ of $S^1$.

\begin{lemma}
\label{supp}
If $\mu$ is a Borel probability measure of $S^1$, then supp$(\mu)\subset\Omega(f)$
for all $\mu$-expansive homeomorphism $f$.
\end{lemma}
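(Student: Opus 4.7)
The plan is to exploit Theorem \ref{circle1} to reduce the statement to a structural property of Denjoy homeomorphisms and then contradict $\mu$-expansiveness by producing a neighborhood of a would-be wandering point that sits inside $\Gamma_\delta$.

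More precisely, by Theorem \ref{circle1} any $\mu$-expansive homeomorphism $f$ of $S^1$ is Denjoy, so it admits a unique minimal set $\Delta$ which is a Cantor set and has no periodic points. Since $\Delta$ is minimal, every point of $\Delta$ has dense orbit in $\Delta$, so every point of $\Delta$ is recurrent and in particular nonwandering; thus $\Delta \subset \Omega(f)$. It therefore suffices to prove $\mathrm{supp}(\mu) \subset \Delta$.

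Suppose, for contradiction, that there is $x_0 \in \mathrm{supp}(\mu)\setminus \Delta$, and let $J$ be the connected component of the open set $S^1\setminus \Delta$ containing $x_0$. Since $\Delta$ is $f$-invariant, $f^n(J)$ is again a component of $S^1\setminus\Delta$ for every $n\in\mathbb Z$, and the absence of periodic points rules out $f^k(J)=J$ for any $k\neq 0$; hence the intervals $\{f^n(J)\}_{n\in\mathbb Z}$ are pairwise disjoint. Summability of their lengths inside $S^1$ then yields $|f^n(J)|\to 0$ as $|n|\to\infty$. Let $\delta>0$ be an expansivity constant of $f$ and set
\[
N=\{n\in\mathbb Z:|f^n(J)|>\delta/2\},
\]
which is finite. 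For every $y\in J$ and every $n\notin N$, the points $f^n(x_0)$ and $f^n(y)$ both lie in the interval $f^n(J)$ of length $\le \delta/2$, so $d(f^n(x_0),f^n(y))\le \delta$. Using continuity of the finitely many iterates $f^n$, $n\in N$, at $x_0$, I choose an open neighborhood $V\subset J$ of $x_0$ with $d(f^n(x_0),f^n(y))\le \delta$ for every $n\in N$ and every $y\in V$. Combining the two estimates, $V\subset \Gamma_\delta(x_0)$.

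Since $x_0\in\mathrm{supp}(\mu)$, the open neighborhood $V$ has $\mu(V)>0$, so $\mu(\Gamma_\delta(x_0))>0$, contradicting that $\delta$ is an expansivity constant for $f$. The main obstacle is the control on the iterates of the wandering interval $J$; the key input is that pairwise disjointness (guaranteed by the absence of periodic points) forces $|f^n(J)|\to 0$, reducing the infinitely many inequalities needed to place $V$ inside $\Gamma_\delta(x_0)$ to a finite list that is handled by continuity alone.
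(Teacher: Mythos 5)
Your proof is correct. The decisive estimate is the same one the paper uses: a family of pairwise disjoint arcs in $S^1$ has lengths summing to at most the circumference, so all but finitely many iterates of the wandering interval have length below $\delta$, and the finitely many exceptional iterates are absorbed by uniform continuity, producing an open set contained in $\Gamma_\delta(x_0)$ that has positive $\mu$-measure because $x_0\in\mathrm{supp}(\mu)$. Where you differ is in how the disjoint family is produced. You first invoke Theorem \ref{circle1} to get the Denjoy structure, take $J$ to be the gap of the minimal Cantor set $\Delta$ containing the hypothetical point of $\mathrm{supp}(\mu)\setminus\Delta$, and use the absence of periodic points to rule out $f^k(J)=J$ (correctly: a period of $J$ would force a periodic orbit at its endpoints). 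The paper instead argues directly from $x\notin\Omega(f)$: a sufficiently small ball around a wandering point already has pairwise disjoint iterates, with no appeal to the classification of circle homeomorphisms. There is no circularity in your route, since Theorem \ref{circle1} is proved independently of this lemma, and you even obtain the slightly stronger conclusion $\mathrm{supp}(\mu)\subset\Delta$; the trade-off is that your argument for the lemma now rests on the full one-dimensional classification (Theorems \ref{thD} and \ref{circle1}), whereas the paper's version is elementary, self-contained, and applies verbatim to any point outside $\Omega(f)$ without knowing anything else about $f$.
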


\begin{proof}
Suppose by contradiction that
there is $x\in$ supp$(\mu)\setminus \Omega(f)$ for some
$\mu$-expansive homeomorphism $f$ of $S^1$. Let $\delta$ be an expansivity constant of $f$.
Since $x\notin\Omega(f)$ we can assume that
the collection of open intervals $f^n(B(x,\delta))$ as $n$ runs over $  \mathbb{Z}$ is disjoint.
Therefore, there is $N\in \mathbb{N}$ such that the length of $f^n(B(x,\delta))$
is less than $\delta$ for $|n|\geq N$.
From this and the continuity of $f$ we can arrange $\epsilon>0$
such that $B(x,\epsilon)\subset \Gamma_\delta(x)$
therefore $\mu(\Gamma_\delta(x))\geq \mu(B(x,\epsilon))>0$ as
$x\in$ supp$(\mu)$.
This contradicts the $\mu$-expansiveness of $f$ and the result follows.
\end{proof}

We use this lemma together with Theorem \ref{circle1} to obtain another proof
of the following result also by Jacobsen and Utz \cite{ju}.
Classical proofs can be found in Theorem 2.2.26 in \cite{ah},
Subsection 2.2 of \cite{cl}, Corollary 2 in \cite{r} and Theorem 5.27 of \cite{w}.

\begin{corollary}
There are no expansive homeomorphisms of $S^1$.
\end{corollary}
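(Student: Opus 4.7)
The plan is to argue by contradiction using Theorem \ref{circle1} together with Lemma \ref{supp}. Suppose $f: S^1 \to S^1$ is expansive. First I would choose a convenient nonatomic Borel probability measure on $S^1$, namely the normalized Lebesgue measure $\mu$, which is nonatomic and has full support, i.e. $\mathrm{supp}(\mu) = S^1$. By Example \ref{ex1}, since $f$ is expansive and $\mu$ is nonatomic, $f$ is $\mu$-expansive.

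Next I would apply Theorem \ref{circle1}: the existence of some Borel probability $\mu$ for which $f$ is $\mu$-expansive forces $f$ to be a Denjoy homeomorphism of $S^1$. I would then invoke the standard structure of Denjoy homeomorphisms (see \cite{hk}): such an $f$ has no periodic points and admits a unique minimal set $\Delta \subset S^1$ which is a Cantor set, in particular nowhere dense. The complement $S^1 \setminus \Delta$ consists of countably many open arcs that are pairwise disjointly permuted under the iterates of $f$, so every point of $S^1 \setminus \Delta$ is wandering. Hence $\Omega(f) \subset \Delta$.

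To finish, I would apply Lemma \ref{supp} to conclude $\mathrm{supp}(\mu) \subset \Omega(f) \subset \Delta$. But $\mathrm{supp}(\mu) = S^1$ while $\Delta$ is a proper (indeed nowhere dense) subset of $S^1$, which is the desired contradiction.

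The only potentially delicate point is the identification $\Omega(f) \subset \Delta$ for a Denjoy map; this is a classical fact but relies on the disjointness of iterates of a complementary arc, which is exactly the argument already used inside the proof of Lemma \ref{supp}. With that in hand, the argument is a direct three-line consequence of Example \ref{ex1}, Theorem \ref{circle1}, and Lemma \ref{supp}.
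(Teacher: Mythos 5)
Your proposal is correct and follows essentially the same route as the paper: both deduce $Leb$-expansiveness from expansiveness, invoke Theorem \ref{circle1} to get that $f$ is Denjoy, and then contradict Lemma \ref{supp} using the fact that $\Omega(f)$ lies in a Cantor set while $\mathrm{supp}(Leb)=S^1$. Your extra care in justifying $\Omega(f)\subset\Delta$ is a welcome refinement of a step the paper states without detail, but it is not a different argument.
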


\begin {proof}
Suppose by contradiction that there is an expansive homeomorphism of $S^1$.
Since the Lebesgue measure $Leb$ of $S^1$ is nonatomic we obtain that $f$ is $Leb$-expansive.
It follows that
supp$(Leb)\subset \Omega(f)$ by Lemma \ref{supp}. However, $\Omega(f)$ is a Cantor set since
$f$ is Denjoy by Theorem \ref{circle1} and supp$(Leb)=S^1$ thus
we obtain a contradiction.
\end {proof}

\section{The map case}

\noindent
In this section we introduce the concept of positively $\mu$-expansive map
corresponding to that of $\mu$-expansive homeomorphisms.

First recall that a continuous map $f: X\to X$ of a metric space $X$ is
{\em positively expansive} (c.f. \cite{e}) if there is $\delta>0$ such that
for every pair of distinct points $x,y\in X$ there is $n\in \mathbb{N}$ such that
$d(f^n(x),f^n(y))>\delta$.
Equivalently, $f$ is positively expansive if there is $\delta>0$ such that
$\Phi_\delta(x)=\{x\}$ where
$$
\Phi_\delta(x)=\{y\in X:d(f^i(x),f^i(y))\leq \delta,\forall i\in \mathbb{N}\}
$$
(again we write $\Phi_\delta^f(x)$ to indicate dependence on $f$).
This motivates the following definition

\begin{definition}
A continuous map $f: X\to X$ is {\em positively $\mu$-expansive} if there is $\delta>0$ such that $\mu(\Phi_\delta(x))=0$ for all $x\in X$. The constant $\delta$ will be referred to as {\em expansiveness constant} of $f$.
\end{definition}

As in the homeomorphism case we have that
$f$ is positively $\mu$-expansive if and only if there is $\delta>0$ such that
$\mu(\Phi_\delta(x))=0$ for almost every $x\in X$.
Atomic measures $\mu$ do not exhibit positively $\mu$-expansive maps and, for the nonatomic $\mu$, every positively expansive map is positively $\mu$-expansive.
With the same argument as in the case of homeomorphisms
we can easily construct positively $\mu$-expansive maps which are not positively expansive.

An interesting question is motivated by the well known fact that every compact metric spaces supporting positively expanding homeomorphisms is finite \cite{s} (or \cite{rw} for another proof).
Indeed, we ask if the analogous result replacing expansive by $\mu$-expansive holds or not.
Actually, it seems that positively $\mu$-expansive homeomorphisms on compact metric spaces do not exist
(\footnote{Actually these homeomorphisms do exist.}).
One reason for this belief is that,
as in the case of homeomorphisms, we can prove that
if $X$ exhibits a positively $\mu$-expansive map then supp$(\mu)$ has no isolated points
(and so supp$(\mu)$ is infinite).

A necessary and sufficient condition for a given map to be positively $\mu$-expansive
is given as in the homeomorphism case.
Indeed, defining
$$
B_f[x,\delta,n]=\{y\in X:d(f^i(y),f^i(x))\leq\delta,\quad\forall 0\leq i<n\}
$$
we obtain
$$
\mu(\Phi_\delta(x))=\lim_{n\to\infty}\mu(B_f[x,\delta,n])=\inf_{n\in \mathbb{N}^+}\mu(B_f[x,\delta,n]),
\quad\quad\forall x\in X,\forall \delta>0,
$$
so, $f$ is positively $\mu$-expansive
if and only if there is
$\delta>0$ such that
\begin{equation}
\label{suff}
\liminf_{n\to\infty}\mu(B_f[x,\delta,n])=0,
\quad\quad\mbox{ for all } x\in X.
\end{equation}
It follows that for all $n\in \mathbb{N}^+$ a continuous map
$f$ is positively $\mu$-expansive if and only if $f^n$ is.
The proof is analogous to the corresponding result for homeomorphisms.

Another equivalent condition for positively $\mu$-expansiveness is given using the idea of positive generators
as in Lemma 3.3 of \cite{c}.
Call a finite open covering $\mathcal{A}$ of $X$
{\em positive $\mu$-generator} of $f$
if for every sequence $\{A_n:n\in \mathbb{N}\}\subset \mathcal{A}$
one has
$$
\mu\left(\bigcup_{n\in \mathbb{N}}f^n(\mbox{Cl}(A_n))\right)=0.
$$
As in the homeomorphism case we obtain the following proposition.
\begin{proposition}
\label{pp1}
A continuous map is $\mu$-expansive if and only if it has a positive $\mu$-generator.
\end{proposition}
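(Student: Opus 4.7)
The plan is to adapt the proof of Theorem~\ref{pp0} to the one-sided setting, replacing bi-infinite sequences with sequences indexed by $\mathbb{N}$, the set $\Gamma_\delta$ by $\Phi_\delta$, and the iterates $f^n$ of a homeomorphism by the preimages $f^{-n}$ (which remain well-defined even when $f$ is only continuous). The proof therefore splits into the usual two implications, in direct analogy with the earlier theorem.

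For the forward implication, assume $f$ is positively $\mu$-expansive with constant $\delta$. I would use compactness of $X$ to extract a \emph{finite} open cover $\mathcal{A}$ of $X$ by $(\delta/2)$-balls. Given any sequence $\{A_n\}_{n\in\mathbb{N}}\subset\mathcal{A}$, set $I = \bigcap_{n\in\mathbb{N}}f^{-n}(\mbox{Cl}(A_n))$. If $I$ is empty the measure vanishes trivially; otherwise fix $x\in I$ and observe that for any $y\in I$ both $f^n(x),f^n(y)\in\mbox{Cl}(A_n)$, so $d(f^n(x),f^n(y))\leq\delta$ for all $n\geq 0$, giving $y\in\Phi_\delta(x)$. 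Hence $I\subset\Phi_\delta(x)$, and positive $\mu$-expansiveness forces $\mu(I)=0$, showing $\mathcal{A}$ is a positive $\mu$-generator.

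For the converse, let $\mathcal{A}$ be a positive $\mu$-generator of $f$ and pick a Lebesgue number $\delta>0$ of $\mathcal{A}$. For every $x\in X$ and $n\in\mathbb{N}$ the closed $\delta$-ball about $f^n(x)$ is contained in $\mbox{Cl}(A_n)$ for some $A_n\in\mathcal{A}$. Any $y\in\Phi_\delta(x)$ then satisfies $f^n(y)\in B[f^n(x),\delta]\subset\mbox{Cl}(A_n)$, so $y\in f^{-n}(\mbox{Cl}(A_n))$ for every $n$. Thus $\Phi_\delta(x)\subset\bigcap_{n\in\mathbb{N}}f^{-n}(\mbox{Cl}(A_n))$, and the generator condition yields $\mu(\Phi_\delta(x))=0$ for every $x\in X$, so $\delta$ is an expansivity constant.

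The argument presents no substantive obstacle: it is essentially a mechanical transcription of the bi-infinite proof, and the only points that deserve a touch of care are using $(\delta/2)$-balls so that the closures of members of $\mathcal{A}$ have diameter at most $\delta$, and writing $f^{-n}$ (preimages) rather than $f^n$ (forward images) in the defining intersection, since $f$ is not assumed invertible.
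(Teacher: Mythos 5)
Your proof is correct and is exactly the argument the paper intends: Proposition~\ref{pp1} is stated with the remark ``as in the homeomorphism case,'' i.e.\ by transcribing the proof of Theorem~\ref{pp0} with $\Phi_\delta$ in place of $\Gamma_\delta$ and $\mathbb{N}$ in place of $\mathbb{Z}$, which is what you do. Your two points of extra care (extracting a \emph{finite} cover by $(\delta/2)$-balls so that the closures have diameter at most $\delta$, and writing the intersection with preimages $f^{-n}$) actually tighten small imprecisions present in the paper's own proof of Theorem~\ref{pp0}.
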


We shall use this proposition to obtain examples of positively $\mu$-expansive maps.
If $M$ is a closed manifold (i.e. a compact connected boundaryless manifold) we call a differentiable map
$f: M\to M$ {\em volume expanding} if there are constants $K>0$ and $\lambda>1$ such that
$|$det$(Df^n(x))|\geq K\lambda^n$ for all $x\in M$ and $n\in \mathbb{N}$.
Denoting by $Leb$ the Lebesgue measure we obtain the following proposition.

\begin{proposition}
\label{exxx1}
Every volume expanding map of a closed manifold is positively $Leb$-expansive.
\end{proposition}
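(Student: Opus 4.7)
The plan is to verify the definition of positively $Leb$-expansive directly, showing that for a suitable $\delta > 0$ every dynamical ball $\Phi_\delta(x)$ has Lebesgue measure zero; one could equivalently build a positive $Leb$-generator and invoke Proposition \ref{pp1}, but the direct argument is more transparent.

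First I would use the hypothesis at $n = 1$: the inequality $|\det Df(x)| \geq K\lambda > 0$ holds for every $x \in M$, so $f$ is a local diffeomorphism by the inverse function theorem. The compactness of $M$ then furnishes, via the standard sequential-compactness argument, a uniform injectivity radius $\delta > 0$ such that $f$ is injective on every closed ball $B[p, \delta]$ with $p \in M$. This $\delta$ will serve as the expansiveness constant.

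Next I would observe that for each $x \in M$ and $k \geq 0$, we have $f^k(\Phi_\delta(x)) \subset B[f^k(x), \delta]$ directly from the definition of $\Phi_\delta(x)$. Since $f$ is injective on each such ball, composing these restrictions shows that $f^n$ is injective on $\Phi_\delta(x)$ for every $n \geq 0$. Note that $\Phi_\delta(x)$ is closed, being an intersection of preimages of closed balls, and hence Borel measurable. Finally I would combine injectivity with volume expansion through the change-of-variables formula:
$$Leb\bigl(f^n(\Phi_\delta(x))\bigr) = \int_{\Phi_\delta(x)} |\det Df^n(y)|\, dLeb(y) \geq K\lambda^n\, Leb(\Phi_\delta(x)).$$
On the other hand, $f^n(\Phi_\delta(x)) \subset B[f^n(x), \delta]$, and $\sup_{p \in M} Leb(B[p, \delta])$ is bounded above by the finite quantity $Leb(M)$. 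Therefore $Leb(\Phi_\delta(x)) \leq Leb(M)/(K\lambda^n)$ for all $n$, forcing $Leb(\Phi_\delta(x)) = 0$ for every $x \in M$.

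The main obstacle is the first step, producing the uniform injectivity radius and then lifting it to injectivity of $f^n$ on the dynamical ball $\Phi_\delta(x)$. Once injectivity of $f^n$ is in hand, the change-of-variables estimate and the finiteness of $Leb(M)$ make the measure-zero conclusion immediate, giving positive $Leb$-expansiveness with constant $\delta$.
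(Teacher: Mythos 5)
Your proof is correct, and it takes a genuinely different route from the paper's. The paper passes to an iterate $g=f^{n_0}$ whose Jacobian satisfies $|\det Dg|\geq\rho>1$ pointwise, derives a local volume-contraction estimate for preimages of small balls, and then shows that any finite covering of $M$ by balls of radius below the Lebesgue number is a positive $Leb$-generator, concluding via Proposition \ref{pp1} together with the fact that positive $\mu$-expansiveness passes between $f$ and its iterates. You instead work directly with the dynamical balls $\Phi_\delta(x)$: the uniform injectivity radius (available since $|\det Df|\geq K\lambda>0$ makes $f$ a local diffeomorphism of a compact manifold) guarantees that $f^n$ is injective on $\Phi_\delta(x)$, because each $f^k(\Phi_\delta(x))$ sits inside a single ball $B[f^k(x),\delta]$ on which $f$ is injective; the change-of-variables formula then forces $Leb(\Phi_\delta(x))\leq Leb(M)K^{-1}\lambda^{-n}$ for every $n$, hence $Leb(\Phi_\delta(x))=0$. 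Your argument avoids the generator machinery entirely and verifies the definition (via the criterion of equation (\ref{suff}), in fact with full limit rather than liminf). It also deals head-on with the multivaluedness of preimages under a non-injective map, a point the paper's estimate $Leb(g^{-1}(B[x,\delta]))\leq\rho^{-1}Leb(B[x,\delta])$ glosses over, since the full preimage of a ball under a covering map of degree $k$ has $k$ branches; your restriction to the dynamical ball, which lies in one injectivity domain at each step, is precisely what makes the volume estimate clean. What the paper's approach buys is a reusable combinatorial criterion (generators) and a reduction to a single iterate; what yours buys is a shorter, self-contained quantitative estimate with an explicit decay rate for $Leb(B_f[x,\delta,n])$.
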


\begin{proof}
If $f$ is volume expanding there are $n_0\in \mathbb{N}$ and $\rho>1$ such that
$g=f^{n_0}$ satisfies $|\mbox{det}(Dg(x))|\geq \rho$ for all $x\in M$.
Then, for all $x\in M$ there is $\delta_x>0$ such that
\begin{equation}
 \label{equ1}
Leb(g^{-1}(B[x,\delta]))\leq \rho^{-1}Leb(B[x,\delta]),
\quad\quad\forall x\in M,\forall 0<\delta<\delta_x.
\end{equation}
Let $\delta$ be half of the Lebesgue number of the open covering
$\{B(x,\delta_x): x\in M\}$ of $M$.
By (\ref{equ1}) any finite open covering of $M$ by $\delta$-balls is
a positive $Leb$-generator, so, $g$ is positively $Leb$-expansive by Proposition \ref{pp1}.
Since $g=f^{n_0}$ we conclude that $f$ is positively $Leb$-expansive
(see the remark after (\ref{suff})).
\end{proof}

Again, as in the homeomorphism case,
we obtain an equivalent condition for positively $\mu$-expansiveness using
the diagonal.
Given a map $g$ of a metric space $Y$
and a Borel probability $\nu$ in $Y$ we say that
$I\subset Y$ is a {\em $\nu$-repelling set} if there is a neighborhood $U$ of $I$ satisfying
$$
\nu(\{z\in Y:g^{n}(z)\in U,\forall n\in \mathbb{N}\})=0.
$$
As in the homeomorphism case we can prove the following.

\begin{proposition}
A continuous map $f$ is positively $\mu$-expansive if and only if
the diagonal $\Delta$ is a $\mu^2$-repelling set of $f\times f$.
\end{proposition}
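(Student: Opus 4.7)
The plan is to mirror the diagonal characterization proved in Section 3 for homeomorphisms, replacing the two-sided orbit condition with a one-sided one. Write $g = f \times f$ and, for each $\delta > 0$, let $U_\delta = \{z \in X \times X : d(z,\Delta) \leq \delta\}$. Under the product metric on $X\times X$, a short computation shows $d((a,b),\Delta) = d(a,b)$, so $U_\delta = \{(x,y) : d(x,y) \leq \delta\}$. The heart of the argument is to establish the analogue of the homeomorphism identity (\ref{eqqq1}), namely
$$
\{z \in X \times X : g^n(z) \in U_\delta,\; \forall n \in \mathbb{N}\} \;=\; \bigcup_{x \in X} \bigl(\{x\} \times \Phi_\delta(x)\bigr).
$$
Both inclusions are immediate from $g^n(x,y) = (f^n(x),f^n(y))$ and the identification of $U_\delta$ above, exactly as in the homeomorphism case but with $n$ restricted to $\mathbb{N}$.

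Since this set is closed (it is a countable intersection of closed sets $g^{-n}(U_\delta)$), it is $\mu^2$-measurable, and Tonelli applied to the characteristic function $(x,y)\mapsto \chi_{\Phi_\delta(x)}(y)$ yields
$$
\mu^2\bigl(\{z : g^n(z) \in U_\delta,\; \forall n \in \mathbb{N}\}\bigr) \;=\; \int_X \mu(\Phi_\delta(x))\, d\mu(x).
$$
For the forward implication, if $f$ is positively $\mu$-expansive with constant $\delta$, the integrand vanishes identically, so $U_\delta$ itself witnesses that $\Delta$ is $\mu^2$-repelling. For the converse, suppose $\Delta$ is $\mu^2$-repelling via some neighborhood $U$. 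Since $X$ is compact (the standing hypothesis from Section 3), $\Delta$ is compact, so there exists $\delta > 0$ with $U_\delta \subset U$. Then the displayed identity combined with monotonicity forces
$$
\int_X \mu(\Phi_\delta(x))\, d\mu(x) \;\leq\; \mu^2\bigl(\{z : g^n(z) \in U,\; \forall n \in \mathbb{N}\}\bigr) \;=\; 0,
$$
so $\mu(\Phi_\delta(x)) = 0$ for $\mu$-almost every $x \in X$. Finally, I would invoke the almost-everywhere characterization of positive $\mu$-expansiveness noted near the start of Section 6 (the direct analogue of Lemma \ref{laguna1}) to upgrade the $\mu$-a.e.\ statement to all $x \in X$, possibly at the cost of halving $\delta$.

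The only step requiring any care is the shrinking of $U$ to some $U_\delta$, which relies on the compactness of $\Delta$; the rest is a routine transcription of the homeomorphism proof with $\mathbb{Z}$ replaced by $\mathbb{N}$. No separate measurability subtleties arise because the orbit set is manifestly closed.
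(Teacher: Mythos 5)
Your proof is correct and follows essentially the same route the paper intends: the paper proves this proposition only by the remark ``as in the homeomorphism case,'' and your argument is exactly that transcription of the Section~3 diagonal theorem, with $\mathbb{Z}$ replaced by $\mathbb{N}$ and $\Gamma_\delta$ by $\Phi_\delta$. The one point where you add something beyond the paper's template --- shrinking an arbitrary neighborhood $U$ of the compact set $\Delta$ to some $U_\delta\subset U$ before applying the Fubini identity, and then invoking the almost-everywhere characterization --- is handled correctly and is in fact needed because the definition of $\mu^2$-repelling does not fix the neighborhood to be of the form $U_\delta$.
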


To finish we introduce an entropy allowing us to detect $\mu$-expansive maps.
To motivate it we recall the local entropy by Brin and Katok \cite{bk}.

The {\em local entropy} of $f$ with respect to
$\mu$ is the map $x\in X\mapsto h_\mu(f,x)$ defined by
$$
h_\mu(f,x)=\lim_{\delta\to 0^+}\limsup_{n\to\infty}-\frac{\log(\mu(B_f[x,\delta,n]))}{n}.
$$
Our entropy will be a variation of this definition. Consider
the map $\delta\mapsto e_\mu(f,\delta)$,
$$
e_\mu(f,\delta)=\inf_{x\in X}\limsup_{n\to\infty}-\frac{\log(\mu(B_f[x,\delta,n]))}{n}
$$
with the convention that $-\log 0=\infty$.
Clearly $e_\mu(f,\delta)$ increases as $\delta$ decreases to $0^+$ so
$\lim_{\delta\to 0^+}e_\mu(f,\delta)$
exists. We call this limit the {\em metric BK-entropy} of a $f$ with respect to $\mu$.
In other words,
$$
e_\mu(f)=\lim_{\delta\to 0^+}\inf_{x\in X}\limsup_{n\to\infty}-\frac{\log(\mu(B_f[x,\delta,n]))}{n}.
$$
This entropy has properties analogous to that of the classical metric entropy \cite{hk}.
For instance, $e_\mu(f^k)=ke_\mu(f)$ for all $k\in \mathbb{N}$ and $e_\mu(f)$ is invariant by
measure-preserving conjugacies.
An example with $e_\mu(f)=0$ is the identity map $I: X\to X$.
Examples with $e_\mu(f)>0$ are the $C^2$ Anosov diffeomorphisms on closed manifolds $M$
(with $\mu$ being in this case the Lebesgue measure of $M$).
This follows from the Bowen-Ruelle volume lemma \cite{br}.
It can be proved as well that
$e_\mu(f)=0$ for atomic measures $\mu$ therefore one can apply the Brin-Katok Theorem \cite{bk} and the classical
variational principle \cite{d}, \cite{g}, \cite{w} to obtain the inequality
$$
\sup_{\mu\in\mathcal{M}_f(X)} e_\mu(f)\leq h(f),
$$
where $h(f)$ is the topological entropy and $\mathcal{M}_f(X)$ is the space of Borel probability invariant measures of $f$.

Our interest by $e_\mu(f)$ is given below.

\begin{theorem}
Every continuous map $f$ for which $e_\mu(f)>0$ is positively $\mu$-expansive
\end{theorem}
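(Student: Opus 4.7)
The plan is to unwind the definition of $e_\mu(f)$ and show that positivity of this quantity forces the dynamical balls $B_f[x,\delta,n]$ to have measure tending to $0$ (in fact exponentially) for every $x$, which is strictly more than what the equivalent condition \eqref{suff} requires for positive $\mu$-expansiveness.

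More precisely, suppose $e_\mu(f) > 0$ and pick $c$ with $0 < c < e_\mu(f)$. Since $e_\mu(f,\delta)$ increases as $\delta \to 0^+$ and has limit $e_\mu(f)$, there is some $\delta_0 > 0$ such that for all $0 < \delta \leq \delta_0$,
\[
\inf_{x \in X}\, \limsup_{n\to\infty}\, -\frac{\log(\mu(B_f[x,\delta,n]))}{n} \;>\; c.
\]
Fix any such $\delta$. By definition of the infimum, for every $x \in X$ we have
\[
\limsup_{n\to\infty}\, -\frac{\log(\mu(B_f[x,\delta,n]))}{n} \;>\; c,
\]
so there is a subsequence $n_k = n_k(x) \to \infty$ with $\mu(B_f[x,\delta,n_k]) < e^{-c n_k} \to 0$. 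Consequently $\liminf_{n\to\infty} \mu(B_f[x,\delta,n]) = 0$ for every $x \in X$.

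Now I apply the equivalent characterization established in the discussion surrounding \eqref{suff}: a continuous map is positively $\mu$-expansive with constant $\delta$ precisely when $\liminf_{n\to\infty}\mu(B_f[x,\delta,n])=0$ for all $x\in X$. Since the sets $B_f[x,\delta,n]$ are nested decreasing with intersection $\Phi_\delta(x)$, this $\liminf$ equals $\mu(\Phi_\delta(x))$, and we have just shown it is zero for every $x$. Hence $f$ is positively $\mu$-expansive with constant $\delta$, as required.

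There is essentially no analytic obstacle here; the proof is a direct translation between the two definitions. The only step worth flagging is the use of the inequality $\inf_{x} a_x > c \Rightarrow a_x > c$ for every $x$, which is where the uniform (in $x$) control coming from the infimum in the definition of $e_\mu(f,\delta)$ gets transferred pointwise — this is exactly what lets us conclude positive $\mu$-expansiveness (which requires $\mu(\Phi_\delta(x)) = 0$ for \emph{every} $x$) rather than merely some almost-everywhere statement.
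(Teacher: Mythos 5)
Your proof is correct and follows essentially the same route as the paper's: extract from $e_\mu(f)>0$ a $\delta>0$ and an exponential decay rate for $\mu(B_f[x,\delta,n])$ along a subsequence (uniformly in $x$ via the infimum), conclude $\liminf_{n\to\infty}\mu(B_f[x,\delta,n])=0$ for every $x$, and invoke the characterization (\ref{suff}). Your version merely spells out more explicitly how the constants are obtained from the monotone limit defining $e_\mu(f)$.
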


\begin{proof}
Since $e_\mu(f)>0$ there are $\delta>0$, $\rho>0$ and $c>0$ such that for
every $x\in X$ there is a sequence $n_k^x\to\infty$ satisfying $\mu(B_f[x,\delta,n_k^x])\leq c e^{-\rho n^x_k}$ for all $k\in \mathbb{N}$.
Since $\rho>0$ we have that
$\mu(B_f[x,\delta,n^x_k])\to 0$ as $k\to\infty$ so
$\liminf_{n\to\infty}\mu(B_f[x,\delta,n])=0$ for all $x\in X$.
Then, the result follows from (\ref{suff}).
\end{proof}


\bibliographystyle{amsplain}

\end{document}